\documentclass{article}
\usepackage{pgfplots}
\usepackage{fullpage}
\usepackage{amsmath}
\usepackage{amsthm}
\usepackage{amssymb}
\usepackage{url}
\usepackage{graphicx}
\usepackage{verbatim}
\usepackage{url}
\usepackage{graphicx}
\usepackage[caption=false]{subfig}
\usepackage{float}
\usepackage[ruled]{algorithm2e}
\usepackage{tikz}
\usetikzlibrary{calc}
\usetikzlibrary{patterns}
\usepackage{tikz}
\tikzstyle{mybox} = [draw=black, fill=white,  thick,
rectangle, inner sep=10pt, inner ysep=20pt]
\tikzstyle{mybox} = [draw=black, fill=white,  thick,
rectangle, inner sep=2pt, inner ysep=2pt]
\usetikzlibrary{arrows}
\usetikzlibrary{arrows,chains,matrix,positioning,scopes}
\usetikzlibrary{arrows,shapes,positioning}
\usetikzlibrary{decorations.markings}
\tikzstyle arrowstyle=[scale=1]
\tikzstyle directed=[postaction={decorate,decoration={markings,
		mark=at position .65 with {\arrow[arrowstyle]{stealth}}}}]
\tikzstyle reverse directed=[postaction={decorate,decoration={markings,
		mark=at position .65 with {\arrowreversed[arrowstyle]{stealth};}}}]
\newcommand{\boundellipse}[3]
{(#1) ellipse (#2 and #3)
}

\newtheorem{theorem}{Theorem}
\newtheorem{lemma}{Lemma}
\newtheorem{cor}{Corollary}
\newtheorem{prop}{Proposition}
\theoremstyle{definition}
\newtheorem{definition}{Definition}
\newtheorem{remark}{Remark}

\usepackage{multirow}
\usepackage[utf8]{inputenc}

\begin{document}


\title{On Tusi's Classification of Cubic Equations and its Connections to  Cardano's Formula and  Khayyam's Geometric Solution}

\author{Bahman Kalantari\footnote{Emeritus Professor of Computer Science, Rutgers University, Piscataway, New Jersey, U.S.A} and Rahim Zaare-Nahandi\footnote{Emeritus Professor of Mathematics, University of Tehran, Tehran, Iran}}
\date{}
\maketitle

\begin{abstract}
Omar Khayyam's studies on cubic equations inspired the 12th century  Persian mathematician Sharaf al-Din Tusi to investigate the number of positive roots.  According to the translation of Tusi's work by the noted mathematical historian Rashed, Tusi analyzed the problem for five different types of equations. In fact all cubic equations are reducible to a single canonical form we call {\it Tusi form} $x^2-x^3=c$. Tusi determined that the maximum of $x^2-x^3$ on $(0,1)$ occurs at $\frac{2}{3}$ and concluded when $c=\frac{4}{27} \delta$, $\delta \in (0,1)$, there are roots in $(0, \frac{2}{3})$ and $(\frac{2}{3},1)$,  ignoring the root in  $(-\frac{1}{3},0)$. Given a {\it reduced form} $x^3+px+q=0$, when $p <0$, we show it is reducible to a Tusi form with $\delta = \frac{1}{2} +  {3\sqrt{3} q}/{4\sqrt{-p^3}}$. It follows that there are three real roots if and only if the discriminant $\Delta =-(\frac{q^2}{4}+\frac{p^3}{27})$ is positive. This gives
an explicit connection between $\delta$ in Tusi form and $\Delta$ in Cardano's formula. Thus when $\delta \in (0,1)$, rather than using Cardano's formula in complex numbers one can use the intervals in Tusi form to approximate the roots iteratively. On the other hand, for a reduced form with $p >0$ we give a novel proof of Cardono's formula.   While Rashed  attributes Tusi's computation of the maximum to the use of derivatives, according to Hogendijk, ``Tusi probably found his results by means of manipulation of squares and rectangles on the basis of Book II of Euclid's {\it Elements}.''  Here we show the maximizer in Tusi form
is directly computable via elementary algebraic manipulations. Indeed for a {\it quadratic Tusi form}, $x-x^2=\delta/4$, Tusi's approach results in a simple derivation of the quadratic formula, comparable with the pedagogical approach of Po-Shen Loh. Moreover, we derive analogous results for the {\it general Tusi form}, $x^{n-1}-x^n=\delta (n-1)^{(n-1)}/n^n$. Finally, gaining insights from Tusi form, we present a concise derivation of Khayyam's geometric solution for all cubic equations. The results here complement previous findings on Tusi's work and reveal further facts on
history, mathematics and pedagogy in solving cubic equations.
\end{abstract}

{\bf Keywords:} Cubic Equations, Cardano's Formula, Khayyam Solution, History of Mathematics,  Mathematics Education.\\

\section{Introduction} \label{sec1} In this article we examine the problem of solving for the real roots of a cubic equation
but in the context of the work of the 12th century Persian mathematician Sharaf al-Din Tusi.  In the process we offer novel insights, including canonical representation of cubic equations, strategies for the approximation of the real roots, connections between the work of Tusi and Cardano's formula discovered several centuries later, as well as connections to Omar Khayyam's geometric solution.
To summarise, we believe the article bridges three significant historic works on cubic equations: Khayyam's geometric solution, Tusi's classification and Cardano's formula, turning these connections into relevant subjects of study in today's curricula.

It is now well known that any cubic equation with real coefficients via affine transformation
can be written in the {\it reduced form} (also called {\it depressed form}):
\begin{equation}
x^3+px+q=0, \quad p, q \in \mathbb{R}.
\end{equation}
In fact  we may assume $p=0, \pm 1$. The case of $p=0$ is trivial. When $p= \pm 1$ we refer to the equation as the {\it normal form}. Any reduced form must have at least one real root, a consequence of the {\it Intermediate Value Theorem}, a fact that most likely was intuitively known at the time of Tusi and even sooner. Given the reduced form, {\it Cardano's formula} for the roots  takes the form
$$
\bigg (-\frac{q}{2} + \sqrt{-\Delta} \bigg )^{1/3} + \bigg (-\frac{q}{2} - \sqrt{-\Delta} \bigg )^{1/3}, \quad \Delta = - \bigg (\frac{q^2}{4} + \frac{p^3}{27} \bigg ).$$

Cardano's formula, apparently first due to del Ferro (1465-1562), was finally published by Cardano in 1545. Others credited to the formula are, Tartaglia and Ferrari.  For an account of the history of cubic equations see e.g., Irving \cite{Irving}.  The discovery of the formula for the solutions of a cubic equation is no doubt very significant by itself, even if one ignores all other consequences, such as the discovery of complex numbers and ultimately Galois theory.  Nevertheless, it does not imply that a formula is always superior to a numerical approximation methods.  This is obvious even in solving a quadratic equation.  A closed formula for roots of $x^2-2=0$ does not provide numerical approximation to $\pm \sqrt{2}$.  But in the case of Cardano's formula other oddities can happen.
When there are three distinct real roots  the {\it discriminant} $\Delta$ is positive so that the formula necessarily passes to complex numbers, a fact that led to the invention of complex number. In this case, if there is no rational root, it is impossible to express the roots in real radicals, see \cite{Warden}.  In other words Cardano's formula is necessarily expressed in terms of the cube roots of two complex numbers. This according to Turnbull \cite{Turn}, see also Zucker \cite{Zucker}, is ``paradoxical.'' He goes on to say,

``{\it To mathematicians of the 16th and 17th centuries this feature was mysterious: they spoke of the {\it irreducible} case. DeMoivre's theorem allows us to negotiate a calculation of the roots; but it remains a curious fact that from a real cubic three real roots cannot be extracted by Cardan's algebraic formula without a circuitous passage into, and out of, the domain of complex numbers.}''

Using DeMoivre's theorem we can avoid the complex plane but at the cost of using transcendental functions with transcendental arguments. Zucker \cite{Zucker} gives a way to bypass the use of trigonometric functions but at the cost of using hypergeometric functions.

Whether we consider solving a cubic equation from  historical point of view,  mathematical, philosophical, practical, or pedagogical point of view, it is undoubtedly a fascinating subject. In this article we refer to the work of Tusi and show it  connects with the work resulting in Cardono's formula and  complements
it in several ways, revealing interesting facts about cubic equations.  The work of Sharaf al-Din Tusi on cubic equations has been analyzed and documented in detail by the noted historian of mathematics of the Golden Age of Islam, Roshdi Rashed \cite{Rashed1, Rashed2}. In this article we rely on Rashed's work and build on it.
Tusi's findings followed the work of Omar Khayyam on cubic equations, both on classifying the number of  positive zeros of a cubic equation and determining some roots. Negative numbers were avoided at the time. Also, equations were written
in such a way that would make the coefficients positive.  In so doing, according to Rashed, Khayyam had considered $25$ forms of polynomials of degree at most three. Quadratic equations were classified by Khwarizmi \cite{Khar} several centuries earlier.  For a history of quadratic and cubic equations see e.g., Katz \cite{Katz} and Irving \cite{Irving}. Khayyam also had derived geometric solutions for positive roots of cubic polynomials. Tusi's treatise in the first part covers $20$ forms of the $25$ polynomials with more rigor than Khayyam. But for the last $5$ forms he analyzes whether or not there are positive roots by means of computation of maximum of the relevant polynomials when the constant term is discarded. The five cubic equations Tusi considers, all with positive coefficients, were,  $x^3 + c = ax^2$, $x^3 + c = bx$, $x^3 + ax^2 +c = bx$,
$x^3 + bx +c = ax^2$, and $x^3 +c = ax^2 + bx$. While mathematical notations where not invented, they could manipulated symbols that  represented these unknowns. Indeed already Khayyam (even Khwarizmi) had names for the unknown $x$ as well as for $x^2$, $x^3$, etc. The fact that they did not allow negative coefficients does not mean they did not deal with negative numbers implicitly. After all, terms with negative coefficients could be moved to the other side of the equation.

The first of the five cubic e1quations Tusi considered, $x^3 + c = ax^2$, can be reduced to the form $x^3+c=x^2$. According to Rashed's examination, in determining the number of positive roots Tusi considered the roots of $x^2 - x^3$ and its maximum value, which Tusi calls ``the greatest number'', by looking at the roots of its derivative, without giving a name for it. Rashed claims Tusi argued the maximum occurs when $x = 2/3$ and concluded the equation $x^2 - x^3 = c$ has exactly one positive solution when $c = 4/27$, and two positive solutions when $0 < c < 4/27$, one in the intervals $(0, \frac{2}{3})$ and another in $(\frac{2}{3},1)$. A third root  in  $(-\frac{1}{3},0)$ was not considered. Also, the case of negative $c$ was not considered. Rashed emphasizes this is not the first time Tusi uses the concept of derivative and that he had already used it for numerical solutions of equations. Rashed's conclusion on the use of derivative has been contested by other scholars who argue that Tusi could have obtained the result on the number of positive roots by other methods, not requiring derivatives, see for example Berggren \cite{Berg}.  Furthermore, Houzel \cite{Houz} has given a different possibility for the use of derivative by Tusi, even in the treatment of numerical solution of equations.  Indeed, Houzel has discussed several examples given by Tusi with his algorithm for finding a positive integer solution of a cubic equation using a polygon which could somehow resemble Newton's polygon for a polynomial in two variables (see \cite{Christ} for an account on Newton's method for approximating roots and resolution of affected equations).  Nevertheless, Rashed's translation of Tusi's work is universally praised.  For example, Hogendijk \cite{Hogen} considers  Rashed's translation significant, saying

\vspace{.25cm}

``{\it Until recently Omar Khayyam was supposed to have given the most advanced medieval treatment of cubic equations. Thanks to Rashed's publication \cite{Rashed1, Rashed2} we now know al-Tusi went considerably further.}''

\vspace{.25cm}

Yet Hogendijk suggests that since cubic curves were never drawn by medieval mathematicians, nor derivatives  mentioned explicitly in
any known medieval Arabic text, the question arises whether Tusi's
methods and motivation can also be explained in terms of standard ancient and
medieval mathematics. He proposes such an alternative explanation and concludes,

\vspace{.25cm}

``{\it Tusi probably found his results by means of manipulations of squares and rectangles on the basis of Book II of Euclid’s Elements.}''

\vspace{.25cm}

Our initial interest was to see if we could find an easy argument using the knowledge of the time that would verifiably and unambiguously compute the maximum value arising in Tusi's analysis. The fact that we are not historians of math does not preclude us from such an investigation, even if we restrict it to the analysis of historians on Tusi's work.  We were able to conclude that the maximizer in Tusi form can be computed via elementary algebraic manipulation that were possible at Tusi's time.  Even geometric-based arguments and manipulations require working with inequalities and equalities so that even if modern notations were not present at Tusi's time, algebraic manipulations were possible.

On the one hand, we show how to directly compute the maximizer. On the other hand, we argue it is easy to guess the location of the maximizer and then verify whether or not a guessed value is the correct maximizer.  Using the maximizer is one component in Tusi's characterization of positive zeros.  A second component that cannot be ignored is that even if in Tusi's time the notion of real numbers was not present, an intuitive notion of the {\it Intermediate Value Theorem} was necessary to conclude the existence of roots in the designated intervals.  In this sense geometric arguments alone are not sufficient.  More importantly, in our investigations we were able to derive further results, including an explicit connection between Tusi's classification and Cardano's formula discovered  centuries later, and even a motivation to give a novel derivation of this formula, as well as a concise and simple  derivation of Khayyam's geometric solution for all cubic equations.  These finding also give rise to algorithms for approximations of  the roots of a cubic equation. In fact by extending Tusi form, we gain insights on the quadratic formula and even high degree Tusi forms.  In summary, our results complement previous findings on Tusi's work and show novelties about solving cubic equations that will be of interest, not just to historians of mathematics but the community of mathematicians, mathematics educators and students.

Studying Tusi's work gives rise to classification of all cubic equations with real coefficients taking one of the two exclusive forms
\begin{equation}
x^3-x^2+\frac{4}{27}\delta =0, \quad \delta \in (0,1); \quad {\rm ~~and~~}
x^3 + x+q=0.
\end{equation}
We shall refer to the first equation without any restrictions on $\delta$  as {\it Tusi form}. In this article we show every reduced cubic equation with $p <0$ is in turn reducible to a  Tusi form with $\delta = \frac{1}{2} +  \frac{3\sqrt{3}q}{4 \sqrt{-p^3}}$ and then from Tusi's classification it follows that it has three real roots if and only if $\Delta >0$.  In particular, characterization of zeros of Tusi form gives rise to the use of $\Delta$ present in Cardano's formula. When the discriminant is positive, Cardano's formula is necessarily expressed in complex numbers. However, Tusi form with $\delta \in (0,1)$ necessarily will have a root in each of the three intervals, $(0,2/3)$, $(2/3,1)$ and $(-1/3, 0)$.  This information can be used to approximation the roots, e.g., by using the  bisection method or  Newton method.  Indeed even a precomputed table of discrete  values of $x^2-x^3$ in the interval $[-1/3,1]$ (an interval containing the three roots) to a desired precision suffices in the approximation of the three roots of a Tusi form with $\delta \in (0,1)$. In summary, Tusi form simplifies solving a cubic equation by their formulation into a corresponding Tusi form, $x^3-x^2+4\delta/27=0$, where there are three distinct real roots if and only if $\delta \in (0,1)$.
The normal form $x^3+x+q=0$ covers all other cases. We may conclude a real cubic has a single real root if and only if it is either reducible to Tusi form with $\delta \not \in (0,1)$, or reducible to a reduced form with $p=1$.

The remaining sections are organized as follows. In Section \ref{sec2} we consider Tusi form and characterize its zeros. In Section \ref{sec3} we characterize the zeros of reduced and normal forms.  In Section \ref{sec4} we first derive Cardano's formula for normal forms and then extends it to reduced forms.  In Section \ref{sec5} we derive Khayyam's geometric solution of cubic equation for normal forms and describe an algorithm for approximation of the roots utilizing this formulation.  In Section \ref{sec6} we consider solving quadratic equations, showing  all real quadratic equations are reducible to the following {\it quadratic Tusi form},
\begin{equation}
x^2-x+ \frac{\delta}{4}=0, \quad \delta \in \mathbb{R}.
\end{equation}
This trivially leads to the quadratic formula.  Knowing how to solve this single quadratic equation is enough to solve  all quadratic equations. We contrast this approach to solving a quadratic equation with Po-Shen Loh's \cite{Loh} derivation of the quadratic formula (also featured in The New York Times, see \cite{NYT}), considered novel and pedagogically advantageous to the standard method based on completing the square. In Section \ref{sec7} we extend relevant results to the {\it general Tusi form}
\begin{equation}
x^{n}-x^{n-1}+ \frac{{(n-1)}^{n-1}}{n^n} \delta =0, \quad n \geq 2, \quad \delta \in \mathbb{R}
\end{equation}
while using  much the same reasoning as the case of cubic equation. We end with brief concluding remarks.

\section{Tusi Form and Characterization of its Zeros} \label{sec2} Tusi considered the number of positive  zeros of
\begin{equation}
p(x)=x^3-bx^2+c=0,   \quad b >0.
\end{equation}
He assumed $c >0$, however we consider $c$ to be arbitrary. The above equation is equivalent to
\begin{equation}
g(x)= bx^2 - x^3= x^2(b-x)=c.
\end{equation}
Graphically, the problem seeks the number of intersections of the horizontal line $y=c$ and the curve defined by $g(x)$. Clearly, $q(x)=0$ at $x=0$ and $x=b$, while it stays positive on $(0,b)$ and  negative on $(0, \infty)$.  Since negative zeros were ignored at the time, the focus of Tusi's search was the interval $(0,b)$. Apparently Tusi reasoned that in order to determine how many solutions are possible for a given $c$, he had to determine the largest value of $g(x)$ on the interval $(0,b)$. Using derivatives, trivially the maximum occurs at $2b/3$ and is $4b^3/27$. Moreover, $g(x)$ is monotonically increasing on $(0,2b/3)$, and decreasing on $(2b/3,b)$. Thus for any $c \in (0,4b^3/27)$ there are two positive solutions to the cubic equation. The details of Tusi's reasoning, as interpreted by  Rashed, is given in \cite{Rashed2}.  We offer a new interpretation. We can reformulate Tusi's cubic by replacing $x$ with $\alpha b$, reducing it into a canonical form. Letting
\begin{equation}
\phi(\alpha) = \alpha^{2}- \alpha^3,
\end{equation}
then  $g(\alpha b)= b^3 \phi(\alpha)$.
Thus  $g(\alpha b)=c$ if and only if $\phi(\alpha)= c/b^3$ if and only if $p(\alpha b)=0$.

To solve $\phi(\alpha) = constant$ for positive solutions, we observe that $\phi$ is zero at $\alpha=0, 1$.  Thus if we can determine the location of the maximizer, $\alpha^*$,  of $\phi(\alpha)$ between zero and one, then by changing the $constant$ value between zero and the maximum value, $\phi^*= \phi(\alpha^*)$, we may conclude that positive roots of $\phi(\alpha) = constant$  must lie in $(0,\alpha^*)$ and $(\alpha^*,1)$.  In Proposition \ref{prop0} we derive $\alpha^*$ via a simple argument without differentiation.
However, one may ask: Can we guess the value of $\alpha^*$? After all, nowadays,  as well as throughout the history, guessing the answer to math problems is one of the significant techniques to solving them. But guessing must be followed by rigorous proof. Even if an initial guess is wrong, trial and error will soon lead to the correct answer. Note that $\alpha^2-\alpha^3=\alpha^2(1-\alpha)$. Thus  $\alpha^* \geq 1/2$.  One would try $1/2$, $3/4$, $2/3$, etc.  The fact that $\alpha - \alpha^*$ must be a factor of $\phi(\alpha) - \phi(\alpha^*)$ provides the mechanism for verification of the correctness of a guessed value for $\alpha^*$. Lemma \ref{lem3z} proves the maximum value is $4/27$, attained at $2/3$. Thus we set $constant =\delta 4/27$ and when $\delta \in (0,1)$ the solutions to $\phi(\alpha) = constant$ lie in $(0,1)$.

\begin{definition}
We say a cubic equation is in {\it Tusi form} if it is given as $\alpha^3-\alpha^2+\delta \frac{4}{27}=0$, $\delta \in \mathbb{R}$.
\end{definition}

\begin{prop}   \label{prop0} The maximum value of  $\phi(\alpha)$ on $[0, 1]$ is attained at $\frac{2}{3}$.
\end{prop}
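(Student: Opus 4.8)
The plan is to find the maximizer of $\phi(\alpha) = \alpha^2 - \alpha^3$ on $[0,1]$ without using differentiation, as the paper emphasizes that Tusi's methods predate calculus. The key insight the excerpt itself hands us is that \emph{if} $\alpha^*$ is the maximizer, then $(\alpha - \alpha^*)$ must divide $\phi(\alpha) - \phi(\alpha^*)$ — and in fact, since $\alpha^*$ is a maximum in the interior, $(\alpha - \alpha^*)^2$ must divide $\phi(\alpha) - \phi(\alpha^*)$. This is the algebraic shadow of the condition that the tangent is horizontal, but it requires no derivatives to state.

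**First I would** set up the factorization. For a candidate maximizer $m \in (0,1)$, the cubic $\phi(\alpha) - \phi(m) = (\alpha^2 - \alpha^3) - (m^2 - m^3)$ vanishes at $\alpha = m$, so $(\alpha - m)$ is a factor. Writing $\phi(m) - \phi(\alpha) = \alpha^3 - \alpha^2 + (m^2 - m^3)$ (a monic cubic in $\alpha$ up to sign), I would perform the division and demand that $(\alpha - m)^2$ divides it. Concretely, one can write
\begin{equation}
\alpha^3 - \alpha^2 + (m^2 - m^3) = (\alpha - m)^2(\alpha - r)
\end{equation}
for some third root $r$, then expand the right-hand side and match coefficients of $\alpha^2$, $\alpha^1$, and $\alpha^0$. **The main computation** is solving this small coefficient-matching system: the $\alpha^2$ coefficient gives $2m + r = 1$, the $\alpha^1$ coefficient gives $m^2 + 2mr = 0$, and the constant term gives $m^2 r = -(m^2 - m^3)$. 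From $m^2 + 2mr = 0$ with $m \neq 0$ one gets $r = -m/2$, and substituting into $2m + r = 1$ yields $2m - m/2 = 1$, hence $m = 2/3$. I would verify consistency with the constant-term equation as a check.

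**The step I expect to be the main obstacle** is the logical justification that the interior maximizer must produce a \emph{double} root of $\phi(\alpha) - \phi(m)$, rather than just a simple one — that is, establishing rigorously, in the spirit of the era, why horizontal-tangency translates to the repeated-factor condition. The clean elementary way around this is to argue directly: since the line $y = \phi(m)$ is tangent to (rather than crossing transversally) the curve at an interior extremum, the difference $\phi(m) - \phi(\alpha)$ cannot change sign near $\alpha = m$, which forces the factor $(\alpha - m)$ to appear to an even multiplicity, i.e.\ at least squared. **Finally I would** confirm that $m = 2/3$ indeed yields the maximum and not a minimum by observing that $\phi(0) = \phi(1) = 0$ while $\phi(2/3) = 4/27 > 0$, so the interior critical value is the largest on $[0,1]$; combined with the monotonicity noted earlier in the excerpt, this pins down $2/3$ as the unique maximizer. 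This keeps the entire argument within elementary algebra, matching the paper's stated goal of a derivation available at Tusi's time.
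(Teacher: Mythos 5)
Your proof is correct, but it reaches $2/3$ by a genuinely different route than the paper. The paper factors $\phi(\alpha)-\phi(\alpha^*)=-(\alpha-\alpha^*)q(\alpha)$ only once, locates a second root $\beta^*\in(0,1)$ of the quadratic $q$ by a sign change, and then establishes $\alpha^*=\beta^*$ (i.e.\ the double-root property) indirectly, through a three-way case analysis on $\phi((\alpha^*+\beta^*)/2)$ versus $\phi(\alpha^*)$ that invokes the intermediate value property and the bound of three roots for a cubic; only then does it extract $\alpha^*=2/3$ from the quadratic formula applied to $q$. You instead justify the double-root property up front by the parity-of-multiplicity argument (an odd-order factor $(\alpha-m)^k$ would force $\phi(m)-\phi(\alpha)$ to change sign across an interior maximizer), and then read off $m=2/3$ from a clean coefficient match in $(\alpha-m)^2(\alpha-r)=\alpha^3-\alpha^2+(m^2-m^3)$; your system $2m+r=1$, $m^2+2mr=0$, $m^2r=m^3-m^2$ is consistent and does give $m=2/3$, $r=-1/3$. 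What each buys: your version is shorter, avoids the somewhat delicate ``more than three roots'' contradiction in the paper's third case, and makes the tangency-equals-even-multiplicity principle explicit; the paper's version stays closer to what one could plausibly reconstruct from Tusi-era manipulations (a single division plus sign checks on a quadratic) and does not presuppose the repeated-factor criterion. One small point to make explicit in your write-up: you should note that a maximizer exists and lies in the open interval $(0,1)$ \emph{before} invoking the two-sided sign argument (immediate from $\phi(0)=\phi(1)=0$ and $\phi(1/2)>0$); you only record this at the end, where it reads as an afterthought rather than as the hypothesis your multiplicity argument needs.
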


\begin{proof} Let $\alpha^*$ be the maximizer. It must be a root of
$$h(\alpha)=\phi(\alpha)-\phi(\alpha^*)=(\alpha^2-{\alpha^*}^2) - (\alpha^3-{\alpha^*}^3).$$
But $h(\alpha)$ can be written as
$$h(\alpha)=-(\alpha-\alpha^*)q(\alpha), \quad q(\alpha)=\alpha^2+(\alpha^* -1)\alpha+\alpha^*(\alpha^*-1).$$
Now $q(0)=\alpha^*(\alpha^*- 1) <0$, $q(1)={\alpha^*}^2 >0$ so that $h(\alpha)$ has another root $\beta^* \in (0,1)$. Solving $q(\alpha)=0$ via the quadratic formula it is easy to see there is also a negative root.
Thus if $\alpha^*=\beta^*$ we get
$$\sqrt{(\alpha^*-1)^2-4 \alpha^*(\alpha^* -1)}=(3 \alpha^* -1).$$
Squaring and solving for $\alpha^*$, we get $\alpha^* =2/3$. Suppose $\alpha^* \not = \beta^*$. We argue
$\phi((\alpha^*+ \beta^*)/2)$ cannot be larger, equal or smaller than  $\phi(\alpha^*)$.  It cannot be larger because $\alpha^*$ is maximizer. It cannot be equal because then $h(\alpha)=0$ has three positive roots but we know it has a negative root.  If it is less, then for any value $c$ between the $\phi((\alpha^*+ \beta^*)/2)$ and $\phi(\alpha^*)$ then because of fluctuation of $\phi$ the equation  $\phi(\alpha)=c$ must have more than three roots in $(0,1)$, a contradiction.
\end{proof}

\begin{remark} One may claim that Tusi did not have algebraic notations and for him $x^3$ and $x^2$ were solids. However, even so he must have manipulated symbols that  represented these unknowns. Indeed already Khayyam (even Khwarizmi) had names for the unknown $x$ as well as for $x^2$, $x^3$, etc.
In fact Hogendijk  who thinks  Tusi ``probably'' found his results by means of manipulation of squares and rectangles on the basis of Book II of Euclid's Elements, still argues (pages 74,75) that Tusi defines the maximizer algebraically.  Though the true derivation of the value of maximizer is unclear.     Note that  from Proposition \ref{prop0} the derivation of the maximizer does  not automatically follow from factorization of $\phi(\alpha)-\phi(\alpha^*)$. Additional arguments are needed for it.   In this sense Tusi's derivation needs more than Euclid's geometric arguments to derive the maximizer.   As will be shown in the next lemma,  another approach starts by guessing the value for the maximizer.  Either approach of Proposition \ref{prop0} or Lemma \ref{lem3z} requires factorization as a first step in the proof, a process that was certainly possible at Tusi's time, even in the absence of formal mathematical notations.
\end{remark}

\begin{lemma}   \label{lem3z} The maximum value of  $\phi(\alpha)$ on $[0, \infty)$ is $\phi^*=\frac{4}{27}$, attained at $\alpha^*= \frac{2}{3}$.
Moreover,

$$
\phi(\alpha)
\begin{cases}
\rightarrow \infty, & \text{as~} \alpha  \rightarrow -\infty\\
\nearrow   \text{on~} [0, \frac{2}{3}]\\
\searrow  \text{on~} [\frac{2}{3}, \infty)
\end{cases}
$$
where $\nearrow$ and $\searrow$ mean strictly increasing and decreasing, respectively.
\end{lemma}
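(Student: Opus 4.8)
The plan is to separate the two easy assertions from the substantive one. The limit $\phi(\alpha)\to\infty$ as $\alpha\to-\infty$ is immediate from the factored form $\phi(\alpha)=\alpha^{2}(1-\alpha)$, since both factors tend to $+\infty$. Once the maximizer is known to be $\alpha^{*}=2/3$ (already located in Proposition \ref{prop0}), the maximum value is simply $\phi(2/3)=\tfrac{4}{9}-\tfrac{8}{27}=\tfrac{4}{27}$; alternatively I would record the clean factorization $\phi(2/3)-\phi(\alpha)=(\alpha-\tfrac{2}{3})^{2}(\alpha+\tfrac{1}{3})$, which shows directly that $\phi(\alpha)\le \tfrac{4}{27}$ for every $\alpha\ge-\tfrac13$ with equality only at $2/3$, thereby confirming the guessed maximizer. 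The real content of the lemma is the strict monotonicity, and I want to establish it \emph{without} differentiation, consistent with the paper's aim of using only manipulations available at Tusi's time.

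For the monotonicity I would start from the difference identity, valid for $\alpha<\beta$,
\[
\phi(\beta)-\phi(\alpha)=(\beta-\alpha)\bigl[(\alpha+\beta)-(\alpha^{2}+\alpha\beta+\beta^{2})\bigr],
\]
which reduces the sign of $\phi(\beta)-\phi(\alpha)$ to the sign of the bracket $B:=(\alpha+\beta)-(\alpha^{2}+\alpha\beta+\beta^{2})$. The key elementary observation is that $\alpha^{2}-\tfrac{2}{3}\alpha=\alpha(\alpha-\tfrac23)$ is $\le 0$ on $[0,\tfrac23]$ and $\ge 0$ on $[\tfrac23,\infty)$. Bounding $\alpha^{2}+\beta^{2}$ by $\tfrac23(\alpha+\beta)$ from the appropriate side then bounds $B$ above or below by $\tfrac13(\alpha+\beta)-\alpha\beta$, whose sign I determine by rewriting $\tfrac13(\alpha+\beta)-\alpha\beta\gtrless 0\iff \tfrac1\alpha+\tfrac1\beta\gtrless 3$ for positive $\alpha,\beta$.

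On $[0,\tfrac23]$ both reciprocals are $\ge \tfrac32$, with strict inequality for the smaller argument, so $\tfrac1\alpha+\tfrac1\beta>3$, forcing $B>0$ and hence $\phi$ strictly increasing (the case $\alpha=0$ I would dispatch directly via $B=\beta(1-\beta)>0$). On $[\tfrac23,\infty)$ both reciprocals are $\le \tfrac32$, with strict inequality for the larger argument, so $\tfrac1\alpha+\tfrac1\beta<3$, forcing $B<0$ and $\phi$ strictly decreasing. Combining the two monotonicity statements gives that the maximum over $[0,\infty)$ is attained at $2/3$, which completes the proof.

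I expect the main obstacle to be stating the strictness cleanly and covering the boundary cases, namely $\alpha=0$ in the increasing range and the shared endpoint $\alpha=2/3$: the bound $B\lessgtr \tfrac13(\alpha+\beta)-\alpha\beta$ degenerates to equality exactly at the corner $\alpha=\beta=\tfrac23$. I would handle this by observing that the hypothesis $\alpha<\beta$ rules out both arguments equalling $\tfrac23$ simultaneously, which is precisely what makes the reciprocal inequality strict and hence the monotonicity strict.
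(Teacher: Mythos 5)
Your proof is correct, and for the substantive part of the lemma --- the strict monotonicity --- it takes a genuinely different route from the paper. The paper obtains the single identity $\alpha^2-\alpha^3-\frac{4}{27}=-(\alpha-\frac{2}{3})^2(\alpha+\frac{1}{3})$ (equation (\ref{eqcub1})) by dividing out $(\alpha-\frac{2}{3})$ twice, and then reads off both the maximum and the monotonicity from the behaviour of the two factors; you instead work with the finite difference $\phi(\beta)-\phi(\alpha)$ for $\alpha<\beta$, factor out $(\beta-\alpha)$, and determine the sign of the symmetric cofactor $B=(\alpha+\beta)-(\alpha^2+\alpha\beta+\beta^2)$ via the comparison $\alpha^2\lessgtr\frac{2}{3}\alpha$ and the reciprocal criterion $\frac{1}{\alpha}+\frac{1}{\beta}\gtrless 3$. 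Each approach has something to recommend it. The paper's single factorization is economical and immediately exhibits the maximum value, the double root at $\frac{2}{3}$, and the sign change at $-\frac{1}{3}$; but its monotonicity argument on $[0,\frac{2}{3}]$ is stated only in terms of the inequality $-(\alpha_1-\frac{2}{3})^2<-(\alpha_2-\frac{2}{3})^2$, and since the other factor $(\alpha+\frac{1}{3})$ is also increasing there, the conclusion that the product increases does not follow from that one inequality alone and really needs an extra word. Your finite-difference argument is longer but airtight on exactly this point: strictness falls out of the observation that $\alpha<\beta$ prevents both arguments from sitting at $\frac{2}{3}$, and the boundary case $\alpha=0$ is dispatched separately. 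Both proofs are differentiation-free and equally in the spirit of the paper's historical framing; you may as well keep the factorization $\phi(\frac{2}{3})-\phi(\alpha)=(\alpha-\frac{2}{3})^2(\alpha+\frac{1}{3})$ that you record, since it gives the maximum over all of $[-\frac{1}{3},\infty)$ (hence over $[0,\infty)$) in one line even before the monotonicity is established.
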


\begin{proof} Clearly $\phi(\alpha)$  approaches $+\infty$ as $\alpha$ approaches $-\infty$.  Dividing $\phi(\alpha) - \phi^*$  by $(\alpha -2/3)$, twice, it is straightforward to show
\begin{equation} \label{eqcub1}
\alpha^2-\alpha^3-\frac{4}{27}=-(\alpha - \frac{2}{3})^2(\alpha +\frac{1}{3}).
\end{equation}
For $\alpha > 2/3$ the above is strictly decreasing.   When $0 \leq \alpha_1 <\alpha_2 \leq 2/3$,
$-(\alpha_1 - \frac{2}{3})^2 < - (\alpha_2 - \frac{2}{3})^2$. From this inequality and (\ref{eqcub1}) it follows that $\alpha^2-\alpha^3-\frac{4}{27}$ is strictly increasing on $[0,2/3]$.
\end{proof}

\begin{remark}  While in Tusi's time formal mathematical notations where not in use, he was aware of what is now called Horner or Ruffini-Horner method which can be used to factor a cubic equation. Historians who have studied Tusi's work have spoken of his knowledge of the method.  Thus the above proof was possible at his time.
\end{remark}

\begin{prop} \label{prop11} {\rm (Intermediate Value Theorem)} If $\phi(\alpha_1) = c_1$ for some $\alpha_1$ and $c_1$ and
$\phi(\alpha_2) = c_2$ for some $\alpha_2$ and $c_2$, $c_1 < c_2$, then for any $c' \in (c_1,c_2)$, there exists $\alpha'$ such that $\phi(\alpha')=c'$. \qed
\end{prop}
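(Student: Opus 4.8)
The plan is to recognize that this proposition is nothing more than the classical Intermediate Value Theorem specialized to the polynomial $\phi$, and to prove it by reducing to that theorem. The single analytic ingredient I would isolate at the outset is that $\phi(\alpha)=\alpha^2-\alpha^3$ is a polynomial and is therefore continuous on all of $\mathbb{R}$; no other property of $\phi$ is needed for the conclusion.

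Next I would pin down the geometry of the two given points. Since $\phi(\alpha_1)=c_1$ and $\phi(\alpha_2)=c_2$ with $c_1<c_2$, the two function values differ, and hence $\alpha_1\neq\alpha_2$. Relabeling the indices if necessary, I would assume $\alpha_1<\alpha_2$ (the opposite ordering is handled identically after swapping the roles of the two subscripts) and work on the nondegenerate closed interval $[\alpha_1,\alpha_2]$.

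The core step is then a direct appeal to the classical Intermediate Value Theorem: $\phi$ is continuous on $[\alpha_1,\alpha_2]$, its endpoint values are $c_1$ and $c_2$, and by hypothesis the target value $c'$ lies strictly between them. The theorem therefore furnishes some $\alpha'\in(\alpha_1,\alpha_2)$ with $\phi(\alpha')=c'$, which is exactly the claim.

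Mathematically there is no genuine obstacle here: the entire content of the proposition is the completeness of the reals repackaged as the IVT, which the paper treats as intuitively available (and historically plausible) background knowledge, so the proof is essentially a one-line invocation. The only point deserving a line of care is the remark that $\alpha_1\neq\alpha_2$, which guarantees the interval is nondegenerate. If one instead wanted a fully self-contained argument rather than citing the classical theorem, I would supply the standard bisection proof: repeatedly halve the interval, retaining at each stage the subinterval on whose endpoints $\phi$ straddles $c'$, and take $\alpha'$ to be the common limit of the nested endpoints, at which continuity of $\phi$ forces $\phi(\alpha')=c'$. This bisection scheme is also precisely the numerical procedure the introduction advertises for approximating the roots of a Tusi form, so the two uses dovetail naturally.
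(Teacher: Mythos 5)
Your proof is correct and matches the paper's intent: the paper states this proposition with no proof at all (it is marked \qed and the following remark explains that the Intermediate Value Theorem is ``intuitive and must have been taken as granted at Tusi's time''), and your appeal to the continuity of the polynomial $\phi$ together with the classical IVT is precisely the standard justification the paper implicitly relies on. Your added care about $\alpha_1\neq\alpha_2$ and the optional bisection argument are fine but not needed beyond what the paper assumes.
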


\begin{remark} While the formalization of Intermediate Value Theorem uses calculus and continuity, it is intuitive and must have been taken as granted at Tusi's time.
\end{remark}

Theorem \ref{Tusithm} below follows from Lemma \ref{lem3z} and shows the status of zeros of a Tusi form.  Figure 1 shows graphs of $\alpha^2-\alpha^3$ and $\alpha^3+\alpha$.  The latter polynomial, to be analyzed in the next section, is complementary to Tusi form in the sense that any cubic equation is reducible, either to $\alpha^3-\alpha^2+ \delta 4/27=0$, $\delta \in (0,1)$, or to $\alpha^3+\alpha+q=0$. The figure thus gives a visual summary for solving a general cubic equation by reducing it to intersecting one of only two curves and the
horizontal line defined by the constant term of the equation.

\begin{theorem} \label{Tusithm} {\rm (Tusi Theorem)} Given a real constant $\delta$, the number of distinct real solutions and the location of the solutions of the equation  $\alpha^3- \alpha^2+\delta \frac{4}{27}=0$ satisfy the following (the roots are designated as $\underline \alpha$, $\alpha^*$, etc.)
$$
\begin{cases}
1, \quad \underline \alpha \in (-\infty, -\frac{1}{3}),& \text{if ~} \delta > 1\\
2, \quad  \alpha^*=\frac{2}{3}, \quad \overline \alpha^*=-\frac{1}{3}, & \text{if ~} \delta =1\\
3, \quad 0 < \alpha_1 < \frac{2}{3} <\alpha_2  <1, \quad \alpha_3 \in (-\frac{1}{3},0),& \text{if ~} 0 < \delta < 1\\
2, \quad \alpha_1=0,  \quad  \alpha_2=1,& \text{if ~} \delta =0\\
1, \quad \overline \alpha  \in (1, \infty), & \text{if ~} \delta < 0.
\end{cases}
$$
\qed
\end{theorem}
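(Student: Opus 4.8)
The plan is to recast the equation as a level-set problem and read off every case from the global shape of $\phi(\alpha)=\alpha^2-\alpha^3$. Writing $c=\tfrac{4}{27}\delta$, the equation $\alpha^3-\alpha^2+\tfrac{4}{27}\delta=0$ is equivalent to $\phi(\alpha)=c$, so counting and locating its roots amounts to counting and locating the intersections of the horizontal line $y=c$ with the graph of $\phi$. Lemma~\ref{lem3z} already supplies two of the three monotone branches and the behaviour at $-\infty$; the strategy is to complete the monotonicity diagram, record the values of $\phi$ at the four distinguished abscissae $-\tfrac{1}{3},0,\tfrac{2}{3},1$, and then invoke the Intermediate Value Theorem (Proposition~\ref{prop11}) branch by branch.

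First I would establish that $\phi$ is strictly decreasing on $(-\infty,0]$, the one piece not covered by Lemma~\ref{lem3z}. For $\alpha_1<\alpha_2\le 0$ the identity
$$\phi(\alpha_1)-\phi(\alpha_2)=(\alpha_1-\alpha_2)\bigl[(\alpha_1+\alpha_2)-(\alpha_1^2+\alpha_1\alpha_2+\alpha_2^2)\bigr]$$
has a negative first factor and a negative bracket (a nonpositive term minus a nonnegative one, not both zero), so the difference is positive. Together with Lemma~\ref{lem3z} this yields the full picture: $\phi$ decreases from $+\infty$ to $0$ on $(-\infty,0]$, increases from $0$ to $\tfrac{4}{27}$ on $[0,\tfrac{2}{3}]$, and decreases from $\tfrac{4}{27}$ to $-\infty$ on $[\tfrac{2}{3},\infty)$. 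I would then record the boundary values $\phi(-\tfrac{1}{3})=\tfrac{4}{27}$, $\phi(0)=0$, $\phi(\tfrac{2}{3})=\tfrac{4}{27}$ and $\phi(1)=0$; these are exactly the markers that separate the five regimes of $\delta$.

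With this diagram the case analysis is mechanical. Each of the three monotone branches meets the line $y=c$ at most once, so the total number of roots equals the number of branches whose range contains $c$, and the subinterval in which a given root lies is pinned down by comparing $c$ with the boundary values above. For instance, when $0<\delta<1$ (that is $0<c<\tfrac{4}{27}$) all three branches contribute: the increasing branch gives $\alpha_1\in(0,\tfrac{2}{3})$, the right decreasing branch gives $\alpha_2\in(\tfrac{2}{3},1)$ because $\phi(1)=0<c<\phi(\tfrac{2}{3})$, and the left decreasing branch gives $\alpha_3\in(-\tfrac{1}{3},0)$ because $\phi(-\tfrac{1}{3})=\tfrac{4}{27}>c>\phi(0)=0$. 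The cases $\delta>1$ and $\delta<0$ are read off identically, leaving a single root in $(-\infty,-\tfrac{1}{3})$ and in $(1,\infty)$ respectively. The two degenerate values are handled directly by factorization: for $\delta=1$, (\ref{eqcub1}) gives $\phi(\alpha)-\tfrac{4}{27}=-(\alpha-\tfrac{2}{3})^2(\alpha+\tfrac{1}{3})$, hence the double root $\tfrac{2}{3}$ and the simple root $-\tfrac{1}{3}$, while for $\delta=0$ we factor $\alpha^2(1-\alpha)=0$, giving the double root $0$ and the simple root $1$.

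The only genuinely delicate points, and where I expect the bookkeeping to need care, are (i) distinguishing the exact subinterval of each root rather than merely its branch, which is precisely why the evaluations at $-\tfrac{1}{3}$ and $1$ are required, and (ii) the transition at $\delta=1$, where the two intersections living on the increasing and right-decreasing branches coalesce into the single tangency at $\tfrac{2}{3}$, so that the root count drops to two even though two branches are still involved. Both points are settled by the coincidence $\phi(-\tfrac{1}{3})=\phi(\tfrac{2}{3})=\tfrac{4}{27}$ and the double-root structure of (\ref{eqcub1}), so no idea beyond Lemma~\ref{lem3z} and Proposition~\ref{prop11} is needed.
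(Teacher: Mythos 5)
Your proof is correct and takes essentially the same route the paper intends: the paper states that Theorem~\ref{Tusithm} follows from Lemma~\ref{lem3z} together with the Intermediate Value Theorem (Proposition~\ref{prop11}), and your branch-by-branch level-set analysis of $\phi(\alpha)=\alpha^2-\alpha^3$ is exactly that argument carried out in detail. The only place you go beyond the paper is in explicitly proving strict monotonicity of $\phi$ on $(-\infty,0]$, which Lemma~\ref{lem3z} does not state but which is indeed needed for uniqueness of the root on the negative branch, so this is a useful completion rather than a different method.
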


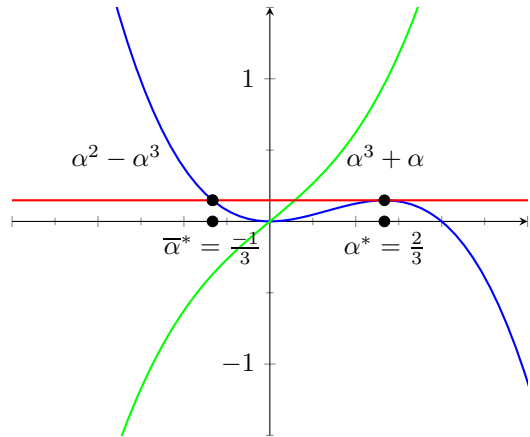
\begin{figure}[htpb] \label{Figone}
\centering
\begin{tikzpicture}[scale=1.]
\begin{axis}[xmin=-1.5,xmax=1.5,ymin=-1.5,ymax=1.5, samples=200,minor tick num=1,xticklabels = {,,},axis lines = middle, legend pos = south east]
\addplot[blue, thick] (x,x*x - x*x*x);
\addplot[green, thick] (x,x*x*x + x);
\addplot[red, thick] (x,4/27);
\addplot[black, mark=*] coordinates {(2/3,0)};
\addplot[black, mark=*] coordinates {(2/3,4/27)} node[below = .3cm] {$\alpha^* = \frac{2}{3}$};
\addplot[black, mark=*] coordinates {(-1/3,4/27)} node[above = .3cm] {~~~~~~~~$\alpha^2-\alpha^3$~~~~~~~~~~~~~~~~~~~~~$\alpha^3+\alpha$};
\addplot[black, mark=*] coordinates{(-1/3,4/27)} node[below = .3cm] {$\overline \alpha^* = \frac{-1}{3}$};
\addplot[black, mark=*] coordinates{(-1/3,0)};
\end{axis}
\end{tikzpicture}
\caption{Any cubic equation is reducible either to $\alpha^2-\alpha ^3=\frac{4}{27}\delta$, $\delta \in (0,1)$,  or $\alpha^3+\alpha=-q$, $q \in \mathbb{R}$.}
\end{figure}

From the relationship between $\phi$, $g$ and $p$, we get the  more general form of Tusi Theorem

\begin{theorem} \label{TusiGthm} Given the real cubic equation $x^3-bx^2+c=0$, $b >0$, the number of distinct real solutions and the location of the solutions satisfy
$$
\begin{cases}
1, \quad \underline x \in (-\infty, -\frac{b}{3}),& \text{if ~} c > \frac{4b^3}{27}\\
2,  \quad x^*=\frac{2b}{3}, \quad \overline x^*=-\frac{b}{3},  & \text{if ~} c=\frac{4b^3}{27}\\
3,  \quad 0 < x_1 < \frac{2b}{3} < x_2 < b, \quad
x_3 \in (-\frac{b}{3},0),  & \text{if ~} 0 < c <\frac{4b^3}{27}\\
2, \quad x_1=0,  \quad  x_2=b,& \text{if ~} c =0\\
1, \quad \overline x \in (b, \infty), & \text{if ~} c <0.
\end{cases}
$$
\qed
\end{theorem}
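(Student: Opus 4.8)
The plan is to reduce Theorem \ref{TusiGthm} to the already-established Tusi Theorem (Theorem \ref{Tusithm}) by the change of variables $x = \alpha b$ introduced in Section \ref{sec2}. First I would recall the equivalences derived there: $x^3 - bx^2 + c = 0$ is the same as $g(x) = bx^2 - x^3 = c$, and substituting $x = \alpha b$ gives $g(\alpha b) = b^3 \phi(\alpha)$ with $\phi(\alpha) = \alpha^2 - \alpha^3$. Hence $x$ solves the general cubic if and only if $\alpha = x/b$ solves $\phi(\alpha) = c/b^3$.

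Next I would match constants. Writing $c/b^3 = \tfrac{4}{27}\delta$, i.e.\ $\delta = \tfrac{27 c}{4 b^3}$, turns $\phi(\alpha) = c/b^3$ into exactly the Tusi form $\alpha^3 - \alpha^2 + \tfrac{4}{27}\delta = 0$. Since $b > 0$, the factor $\tfrac{27}{4 b^3}$ is positive, so the five threshold conditions on $c$ correspond one-to-one with those on $\delta$: $c \gtrless \tfrac{4 b^3}{27}$ matches $\delta \gtrless 1$, $c = 0$ matches $\delta = 0$, and $c < 0$ matches $\delta < 0$.

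Because $b > 0$, the map $\alpha \mapsto x = \alpha b$ is an order-preserving bijection of $\mathbb{R}$. It therefore preserves the number of distinct real roots and sends each root of the Tusi form to a root of the general cubic, scaling every interval endpoint by $b$: the endpoints $-\tfrac{1}{3}, 0, \tfrac{2}{3}, 1$ become $-\tfrac{b}{3}, 0, \tfrac{2b}{3}, b$. Reading off each of the five cases of Theorem \ref{Tusithm} and applying this scaling gives precisely the five cases of Theorem \ref{TusiGthm}.

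I expect no genuine obstacle here, as the argument is a routine rescaling; the only point demanding care is the bookkeeping of interval endpoints and the direction of inequalities, together with the observation that $b > 0$ is exactly what guarantees the substitution is both order-preserving and constant in sign, so that the case analysis transfers without any reversal.
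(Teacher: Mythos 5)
Your proposal is correct and is exactly the paper's (largely implicit) argument: the paper derives Theorem \ref{TusiGthm} from Theorem \ref{Tusithm} "from the relationship between $\phi$, $g$ and $p$," i.e.\ the substitution $x=\alpha b$ with $g(\alpha b)=b^3\phi(\alpha)$ and $\delta = 27c/(4b^3)$. Your write-up simply makes explicit the order-preserving rescaling and the matching of thresholds that the paper leaves to the reader.
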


\begin{remark} We see that using Tusi's methodology one can easily determine the number of real roots of $x^3-bx^2+c=0$ with $b >0$.  Simply by converting this into its Tusi form, not only we can determine the number of real roots as being one or three but also intervals containing the roots (Theorem \ref{TusiGthm}). In the case of $\delta \in (0,1)$  there are three real roots and for each one an interval containing it is at hand. Using the straightforward bisection method, we can approximate the roots to within $\varepsilon$ precision in $\log 1/\varepsilon$ iterations. To get the corresponding approximations with respect to the original equation, we simply multiply by $b$.  This makes the precision to within $\varepsilon b$.  Indeed given a precomputed table of discrete values for $\alpha^2-\alpha^3$ to within a reasonable precision, we can approximate the roots to within that precision simply by looking up the table for $\alpha$ values corresponding to the given $\delta \in (0,1)$. In a sense when a cubic equation has three roots Tusi's analysis turns the task of approximation of roots into a mechanical process. One can imagine that in Tusi's time approximations up to reasonable precision could have been attained for such cubic equations.  When $\delta \not \in [0,1]$, Tusi's methodology still provides information on where the single root in this case would fall and estimates of the interval containing the root can be easily derived. In the next section we show any cubic equation not reducible to Tusi form with $\delta \in [0,1]$ must be reducible to the form $\alpha^3+\alpha+q=0$ (see Figure 1).
\end{remark}

 \section{Characterization of Zeros of Reduced and Normal Forms} \label{sec3}

\begin{prop} Every nontrivial cubic equation can be written in a reduced form, $x^3+px+q=0$, where $p \not =0$. A reduced form with positive $p$ cannot be reduced to one with negative $p$ and conversely. Moreover, a reduced form with $p \not =0$ can be written in {\it normal form}, i.e. $p=\pm 1$.
\end{prop}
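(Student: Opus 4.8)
The plan is to establish the three assertions in turn, each by an explicit affine change of variable, treating the sign of $p$ as the only genuinely subtle point.

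First, for the existence of a reduced form, I would start from a general cubic $a_3 x^3 + a_2 x^2 + a_1 x + a_0 = 0$ with $a_3 \neq 0$, divide through by $a_3$ to make it monic, and then apply the classical depressing substitution $x \mapsto x - a_2/(3a_3)$ (after normalization, $x \mapsto x - a_2/3$) to annihilate the quadratic coefficient. A short expansion shows the transformed equation has the form $x^3 + px + q = 0$, with $p$ and $q$ given explicitly in terms of the $a_i$. Here \emph{nontrivial} is exactly the stipulation that $p \neq 0$; the excluded case $p = 0$ gives $x^3 + q = 0$, solved directly by a single cube root and set aside as trivial.

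The heart of the proposition is the sign-invariance of $p$, and the key observation is to identify precisely which substitutions keep a monic cubic in reduced form. Substituting $x = sy + t$ (with real $s \neq 0$) into $x^3 + px + q$ and collecting terms, the coefficient of $y^2$ is $3s^2 t$; demanding that it vanish forces $t = 0$, since $s \neq 0$. Thus the only admissible maps are the pure scalings $x = sy$, under which, after dividing by $s^3$ to restore monicity, the linear coefficient transforms as $p \mapsto p/s^2$. Because $s$ is real, $1/s^2 > 0$, so $p$ and its image always share the same sign. This shows a reduced form with $p > 0$ can never be carried to one with $p < 0$, and conversely, proving the middle claim.

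Finally, for the normal form, I would take the admissible scaling with $s = \sqrt{|p|}$ (legitimate precisely because $p \neq 0$), producing a new linear coefficient $p/s^2 = p/|p| = \pm 1$ equal to the sign of $p$, together with a new constant $q/|p|^{3/2}$. The \textbf{main obstacle} is conceptual rather than computational: it lies in recognizing that reducibility between two reduced forms is constrained to scalings, so that once translations are ruled out the sign of $p$ becomes a manifest invariant; working over the reals is essential here, since it guarantees $s^2 > 0$. The remaining expansions are routine and can be carried out with elementary algebra.
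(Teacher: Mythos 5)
Your proof is correct, and the existence and normal-form steps coincide with the paper's (depress via $x\mapsto x-b/3$, then scale by $\sqrt{|p|}$). For the middle claim, however — the sign-invariance of $p$ — you take a genuinely different route. The paper argues qualitatively: a reduced form with $p>0$ defines a strictly monotone, hence injective, cubic function, while one with $p'<0$ does not (it is non-monotonic, as is also visible from the Tusi Theorem), and injectivity is preserved under affine changes of variable, so the two cannot represent the same equation. You instead classify the admissible substitutions outright: writing $x=sy+t$ and requiring the $y^2$-coefficient $3s^2t$ to vanish forces $t=0$, so only scalings survive, and these send $p$ to $p/s^2$, which shares the sign of $p$ because $s$ is real. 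Your computation is more explicit and delivers as a byproduct the full transformation law $(p,q)\mapsto(p/s^2,\,q/s^3)$, which is precisely what the normal-form step then exploits with $s=\sqrt{|p|}$; the paper's argument is shorter and ties the invariance to the root-counting theme of the article, but tells you nothing about which transformations are available. Both are complete; the only implicit assumption common to both is that "reduced to" means related by a real affine substitution (together with rescaling the equation), which is the intended reading.
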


\begin{proof} A cubic equation can be written as $x^3+bx^2+cx+d=0$. Replacing $x$ with $x-t$ results in a cubic equation with the coefficient of quadratic term equal to $(3t+b)$. Thus setting $t=-b/3$ gives a reduced form.  Suppose we have two reduced forms for the same equation, say $x^3+px+q=0$ and $y^3+p'y+q'=0$, where $p >0$ but $p' <0$.  The first cubic function is one-to-one while the second is not (also deducible from Tusi Theorem \ref{Tusithm}).  But such property would be preserved under affine transformations, a contradiction.
To show reduction to normal forms, consider  $x^3+px+q=0$. Replacing $x$ by ${|p|}^{1/2} \beta$ results in the equation $|p|^{3/2}(\beta^3 + {\rm sign} (p) \beta) +q=0$, where sign$(p)$ is the sign of $p$.
\end{proof}

\begin{theorem}  \label{thmdelta} A reduced form $x^3+px+q=0$ with  $p <0$ can be written in Tusi form:
$$
\alpha^3 - \alpha^2+ \delta \frac{4}{27}=0, \quad
\delta = \frac{1}{2} +  \frac{3\sqrt{3}q}{4 \sqrt{-p^3}}.
$$
Conversely, given the Tusi form, its reduced form is
$$
x^3 - \frac{1}{3}x + \frac{4\delta -2}{27}=0.
$$
\end{theorem}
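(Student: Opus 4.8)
The plan is to treat the two directions separately, starting with the converse since it is a direct substitution. To pass from the Tusi form $\alpha^3 - \alpha^2 + \delta\frac{4}{27}=0$ to a reduced form, I would substitute $\alpha = x + \frac{1}{3}$, chosen (exactly as in the reduction-to-reduced-form computation of the previous proposition, with shift $-b/3 = \frac13$) so that the quadratic term cancels. Expanding $(x+\frac{1}{3})^3 - (x+\frac{1}{3})^2 + \delta\frac{4}{27}$ and collecting terms, the coefficient of $x^2$ is $3\cdot\frac{1}{3} - 1 = 0$, the coefficient of $x$ is $3\cdot\frac{1}{9} - 2\cdot\frac{1}{3} = -\frac{1}{3}$, and the constant is $\frac{1}{27} - \frac{1}{9} + \delta\frac{4}{27} = \frac{4\delta - 2}{27}$. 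This yields exactly $x^3 - \frac{1}{3} x + \frac{4\delta-2}{27}=0$, establishing the converse.

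For the forward direction, I would first rescale the variable to normalize the linear coefficient to $-\frac{1}{3}$. Given $x^3 + px + q = 0$ with $p<0$, substitute $x = \sqrt{-3p}\,y$ and divide through by $(\sqrt{-3p})^3$; since $p<0$ the quantity $-3p$ is positive, so the substitution is legitimate over the reals. The new linear coefficient becomes $\frac{p}{-3p} = -\frac{1}{3}$ and the new constant becomes $\frac{q}{(-3p)^{3/2}}$. The key algebraic simplification is $(-3p)^{3/2} = 3^{3/2}(-p)^{3/2} = 3\sqrt{3}\,\sqrt{-p^3}$, valid because $p<0$ makes $-p>0$ and $(-p)^3 = -p^3$. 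Thus the rescaled equation is $y^3 - \frac{1}{3} y + \frac{q}{3\sqrt{3}\sqrt{-p^3}} = 0$.

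Having normalized the linear term, I would recognize the rescaled equation as the reduced form of some Tusi form: by the converse just proved, a Tusi form with parameter $\delta$ reduces to $y^3 - \frac{1}{3} y + \frac{4\delta-2}{27}=0$. Matching the constant terms gives the single linear equation $\frac{4\delta - 2}{27} = \frac{q}{3\sqrt{3}\sqrt{-p^3}}$, whose solution, after using $\frac{27}{3\sqrt{3}} = \frac{9}{\sqrt{3}} = 3\sqrt{3}$, is $\delta = \frac{1}{2} + \frac{3\sqrt{3}\,q}{4\sqrt{-p^3}}$. Composing the two substitutions shows that $\alpha = \frac{x}{\sqrt{-3p}} + \frac{1}{3}$ carries the original equation into the asserted Tusi form.

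The computations are elementary, so the only point requiring genuine care — the part I expect to be error-prone rather than conceptually hard — is the bookkeeping of the radicals: correctly simplifying $(-3p)^{3/2}$ into $3\sqrt{3}\sqrt{-p^3}$ and tracking sign conventions throughout, all of which rely essentially on the hypothesis $p<0$. Once this normalization is handled, matching against the converse reduces the whole problem to solving a linear equation in $\delta$.
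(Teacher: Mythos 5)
Your proposal is correct and follows essentially the same route as the paper: both realize the transformation as the affine substitution $\alpha = \frac{x}{\sqrt{-3p}}+\frac{1}{3}$, the only difference being that the paper shifts first (choosing $t=\sqrt{-p/3}$ to kill the linear term) and then scales, whereas you scale first and then reuse the already-proved converse to supply the shift and solve a linear equation for $\delta$. All of your computations, including the simplification $(-3p)^{3/2}=3\sqrt{3}\sqrt{-p^3}$ and the resulting value of $\delta$, check out.
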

\begin{proof} Replacing $x$ in the reduced form with $x-t$, we get
$$
x^3-3tx^2+(3t^2+p)x-t^3-pt+q =0.
$$
Since $p <0$ we can set $t=\sqrt{\frac{-p}{3}}$ to get
$$
x^3- \sqrt{-3p}x^2+ \frac{2\sqrt{-p^3}}{3\sqrt{3}}+q=0.
$$
Replacing $x$ with $\sqrt{-3p}\alpha$ we get
$$
3 \sqrt{3}\sqrt{-p^3}(\alpha^3- \alpha^2)+ \frac{2\sqrt{-p^3}}{3\sqrt{3}}+q=0.
$$
This gives the first statement of the theorem. Replacing $\alpha$ with $\alpha+\frac{1}{3}$ in a Tusi form proves the converse.
\end{proof}

\begin{cor}  \label{cor1} The equation $x^3+px+q=0$ has three distinct real roots if and only if
$\Delta =-(\frac{q^2}{4}+\frac{p^3}{27}) >0$.
\end{cor}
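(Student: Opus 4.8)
The plan is to reduce the question to a Tusi form and then read off the root count from the Tusi Theorem, translating the resulting condition on $\delta$ into one on $\Delta$. I would split the argument by the sign of $p$, since Theorem \ref{thmdelta} applies only when $p<0$. The crucial input is that, by Theorem \ref{Tusithm}, the Tusi form $\alpha^3-\alpha^2+\delta\frac{4}{27}=0$ has three distinct real roots precisely when $\delta\in(0,1)$, and that the substitutions $x\mapsto x-t$ and $x\mapsto\sqrt{-3p}\,\alpha$ used to build the Tusi form in Theorem \ref{thmdelta} are invertible affine maps, hence set up a bijection between roots that preserves both their number and their distinctness. Consequently, for $p<0$ the equation $x^3+px+q=0$ has three distinct real roots if and only if $\delta=\frac{1}{2}+\frac{3\sqrt{3}q}{4\sqrt{-p^3}}$ lies in $(0,1)$.

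First I would treat the case $p<0$. Here $\delta\in(0,1)$ is equivalent to $-\frac{1}{2}<\frac{3\sqrt{3}q}{4\sqrt{-p^3}}<\frac{1}{2}$, that is $\left|\frac{3\sqrt{3}q}{4\sqrt{-p^3}}\right|<\frac{1}{2}$. Since both sides are nonnegative I may square, obtaining $\frac{27q^2}{16(-p^3)}<\frac{1}{4}$, equivalently $27q^2+4p^3<0$ (using $-p^3>0$). Dividing by $108$ shows this is exactly $\frac{q^2}{4}+\frac{p^3}{27}<0$, i.e. $\Delta>0$. Chaining these equivalences establishes the corollary when $p<0$.

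Next I would dispose of the remaining cases, where both sides of the asserted biconditional turn out to be false. When $p>0$ the map $x\mapsto x^3+px$ is strictly increasing, being a sum of the strictly increasing functions $x^3$ and $px$, so the equation has exactly one real root and never three distinct ones; simultaneously $\frac{q^2}{4}+\frac{p^3}{27}>0$ forces $\Delta<0$. When $p=0$ the equation $x^3+q=0$ has the single real root $-q^{1/3}$ (a triple root if $q=0$), again never three distinct roots, while $\Delta=-q^2/4\le 0$. Hence the equivalence holds trivially in these cases.

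I do not anticipate a serious obstacle: all the content sits in the case $p<0$, and there the only points demanding care are the legitimacy of squaring the two-sided inequality, justified by $|t|<\tfrac12\iff t^2<\tfrac14$, and keeping the strictness aligned so that the \emph{open} interval $\delta\in(0,1)$ matches the \emph{strict} inequality $\Delta>0$. The boundary values $\delta=0,1$ correspond to $\Delta=0$ together with a repeated root, which is consistent with the ``two distinct roots'' rows of Theorem \ref{Tusithm} and must be excluded from the three-distinct-roots conclusion.
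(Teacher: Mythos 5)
Your proposal is correct and follows essentially the same route as the paper: reduce to Tusi form via Theorem \ref{thmdelta}, invoke Theorem \ref{Tusithm} to get the condition $\delta\in(0,1)$, and square the resulting two-sided inequality to obtain $\Delta>0$. The only difference is that you explicitly check the cases $p>0$ and $p=0$, which the paper disposes of implicitly by noting that three distinct real roots (equivalently, $\Delta>0$) forces $p<0$; your extra care here is harmless and arguably tidier.
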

\begin{proof} From Theorem \ref{Tusithm} there are three distinct real roots if and only if $\delta \in (0,1)$.  Then from Theorem \ref{thmdelta} it follows that $p$ must be negative and the following must hold
$$
0 < \delta = \frac{1}{2} +  \frac{3\sqrt{3}q}{4 \sqrt{-p^3}} <1.
$$
The above is equivalent to
$$
 -\frac{1}{2} < \frac{3\sqrt{3}q}{4 \sqrt{-p^3}} < \frac{1}{2}.
$$
Equivalently,
\begin{equation} \label{corbdd}
\bigg |\frac{3\sqrt{3}q}{4 \sqrt{-p^3}} \bigg | < \frac{1}{2}.
\end{equation}
Squaring the above and moving terms around gives $\Delta >0$.
\end{proof}

\section{Cardano's Formula for Normal and Reduced Forms} \label{sec4}

\begin{theorem} \label{cardano}  Given $x^3+x+q=0$, there exists a positive number $s$ such that the unique real solution $r$ is
$$
r=\bigg (-\frac{q}{2} + s \bigg )^{1/3} + \bigg (-\frac{q}{2} - s \bigg )^{1/3}.
$$
Moreover, $s$ can be expressed explcitly
$$s= \sqrt{\frac{q^2}{4}+ \frac{1}{27}}.$$
\end{theorem}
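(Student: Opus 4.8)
The plan is to use the classical Cardano ansatz, but to exploit that here $p=1>0$ in order to keep the entire computation inside the real numbers, in sharp contrast to the Tusi case of Corollary~\ref{cor1} where complex numbers are unavoidable. First I would record that $x^3+x+q$ has derivative $3x^2+1>0$, so it is strictly increasing and admits exactly one real root $r$; this justifies speaking of \emph{the} unique real solution and will let me conclude at the end that whatever value the formula produces must coincide with $r$.

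Next I would substitute $x=u+v$ and use the identity $(u+v)^3=u^3+v^3+3uv(u+v)$, so that any root of the form $u+v$ satisfies
$$
(u+v)^3-3uv(u+v)-(u^3+v^3)=0.
$$
Matching this against $x^3+x+q=0$ suggests imposing the two conditions $3uv=-1$ and $u^3+v^3=-q$, that is $uv=-\tfrac13$ and $u^3+v^3=-q$. Cubing the first gives $u^3v^3=-\tfrac1{27}$, so $u^3$ and $v^3$ are the two roots of the resolvent quadratic
$$
t^2+qt-\frac{1}{27}=0.
$$

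I would then solve this quadratic. Its discriminant is $q^2+\tfrac{4}{27}$, and here lies the crucial point: since $p=1$, the quantity under the square root is \emph{automatically} positive, so the two roots
$$
t=-\frac{q}{2}\pm s,\qquad s=\sqrt{\frac{q^2}{4}+\frac{1}{27}},
$$
are real and $s>0$, which at once yields the explicit value of $s$ asserted in the theorem. Setting $u^3=-\tfrac q2+s$ and $v^3=-\tfrac q2-s$ and taking real cube roots produces real $u,v$. The one step that genuinely needs checking is that these real cube roots are compatible with the side condition $uv=-\tfrac13$: this holds because the real cube root is multiplicative, so $uv=(u^3v^3)^{1/3}=(-\tfrac1{27})^{1/3}=-\tfrac13$. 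With this verified, $r:=u+v$ satisfies $r^3=(u^3+v^3)+3uv\,r=-q-r$, hence $r^3+r+q=0$, and by the uniqueness established at the outset this $r$ is the sought real root.

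I expect the main obstacle to be the bookkeeping around reality and the product condition rather than any deep difficulty. One must observe that, had $p$ been negative with the corresponding discriminant negative, the two values $-\tfrac q2\pm s$ would be complex conjugates and the real-cube-root argument would collapse. For $p=1$, however, positivity of $q^2/4+1/27$ removes this obstruction entirely, which is exactly what permits a clean, fully real derivation of Cardano's formula in this case.
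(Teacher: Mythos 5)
Your proof is correct, but it is a genuinely different derivation from the paper's --- in fact it is precisely the ``standard approach'' that the paper describes and deliberately avoids in the remark following Corollary~\ref{cardcor}. You take the classical ansatz $x=u+v$, impose the side condition $3uv=-1$ (which the paper calls a ``non-intuitive guess''), observe that $u^3$ and $v^3$ are then the roots of the resolvent quadratic $t^2+qt-\tfrac{1}{27}=0$, and solve that quadratic to obtain $s$. The paper's proof instead \emph{posits the shape of the answer}: it sets $A=(-\tfrac q2+s)^{1/3}$, $B=(-\tfrac q2-s)^{1/3}$ with $s$ a single unknown, expands $(A+B)^3$ via the same cube identity, and finds that $r=A+B$ satisfies $r^3+r=-q$ exactly when $3\bigl(\tfrac{q^2}{4}-s^2\bigr)^{1/3}+1=0$, i.e.\ $s^2=\tfrac{q^2}{4}+\tfrac{1}{27}$ --- no resolvent quadratic and no auxiliary condition on $uv$ ever appears. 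What your route buys is familiarity and a clean forward construction, together with your careful check that the real cube roots satisfy $uv=(u^3v^3)^{1/3}=-\tfrac13$, a compatibility point the paper's proof handles only implicitly; what the paper's route buys is exactly what it advertises, namely bypassing the quadratic equation and the unmotivated constraint $3uv+p=0$, reducing the whole derivation to one algebraic identity and one equation in the single unknown $s$. Both arguments correctly use strict monotonicity of $x^3+x$ for uniqueness, and your observation that $p=1>0$ forces $\tfrac{q^2}{4}+\tfrac{1}{27}>0$ (so everything stays real) is the same positivity fact the paper relies on.
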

\begin{proof}
Existence of a real root follows from the Intermediate Value Theorem and its uniqueness follows from the monotonicity of $x^3+x$. Let $A=(-{q}/{2}+ s)^{1/3}$, $B=(-{q}/{2}- s)^{1/3}$. If there is $s >0$ such that $r$ above is a root,
using the identity $(A+B)^3=A^3+B^3 + 3AB(A+B)$,  and $r=A +B$, we get
$$r ^3 + r=
-\frac{q}{2}+ s -\frac{q}{2}- s + 3 \bigg (\frac{q^2}{4}- s^2 \bigg )^{1/3} r + r =- q.$$
This implies $3({q^2}/{4}- s^2)^{1/3} +1=0$. Equivalently
${q^2}/{4}- s^2= -{1}/{27}$. We solve for $s$.
\end{proof}

\begin{cor} \label{cardcor} The reduced equation $x^3+px+q=0$, $p >0$, has a unique real solution
$$
\overline r= \bigg (-\frac{q}{2} + \sqrt{\frac{q^2}{4} + \frac{p^3}{27}} \bigg )^{1/3} + \bigg (-\frac{q}{2} - \sqrt{\frac{q^2}{4} + \frac{p^3}{27}} \bigg )^{1/3}.
$$
\end{cor}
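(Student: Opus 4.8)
The plan is to reduce the general case $p>0$ to the normal form already handled in Theorem \ref{cardano}, and then undo the scaling. Since $p>0$, the substitution $x = p^{1/2}\beta$ used in the passage to normal form turns $x^3+px+q=0$ into $p^{3/2}(\beta^3+\beta)+q=0$, that is, $\beta^3+\beta+q'=0$ with $q' = q\,p^{-3/2}$. Uniqueness of the real root carries over because $p>0$ keeps $x^3+px$ strictly increasing, so it suffices to produce the single real $\beta$ from Theorem \ref{cardano} and set $\overline r = p^{1/2}\beta$.

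First I would apply Theorem \ref{cardano} to this normal form, obtaining
\[
\beta = \left(-\frac{q'}{2}+s'\right)^{1/3} + \left(-\frac{q'}{2}-s'\right)^{1/3}, \qquad s' = \sqrt{\frac{q'^2}{4}+\frac{1}{27}},
\]
where $q' = q\,p^{-3/2}$. Because $p>0$, the quantity $p^{3/2}$ is a positive real, so $s'$ is a positive real and both radicands of the cube roots are real; the real cube roots are the ones selected, consistent with the uniqueness noted above.

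The substantive step is to back-substitute $\overline r = p^{1/2}\beta$ and absorb the factor $p^{1/2}$ into each cube root. The key observation is $p^{1/2} = (p^{3/2})^{1/3}$, so that $p^{1/2}(\,\cdots\,)^{1/3} = (p^{3/2}\,\cdots\,)^{1/3}$. Multiplying the first radicand by $p^{3/2}$ gives $-q/2 + p^{3/2}s'$, and since $p^{3/2}s' = \sqrt{p^3\left(q'^2/4 + 1/27\right)} = \sqrt{q^2/4 + p^3/27}$ (using $q' = q\,p^{-3/2}$), the first cube root becomes exactly $\left(-q/2 + \sqrt{q^2/4 + p^3/27}\,\right)^{1/3}$, and identically for the second. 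This yields the stated formula for $\overline r$.

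The main obstacle is purely bookkeeping: ensuring the factor $p^{1/2}$ is distributed into both the outer cube roots and the inner square root without introducing spurious signs. This is precisely where the hypothesis $p>0$ is essential, since it guarantees that $p^{1/2}$ and $p^{3/2}$ are unambiguous positive reals, so that the identity $p^{3/2}\sqrt{\,\cdot\,} = \sqrt{p^3\,(\,\cdot\,)}$ holds and the real branch of each radical is preserved throughout the manipulation.
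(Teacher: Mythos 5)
Your proposal is correct and follows the same route as the paper's own proof: substitute $x=\sqrt{p}\,y$ to pass to the normal form $y^3+y+q p^{-3/2}=0$, apply Theorem \ref{cardano}, and rescale by $\sqrt{p}$. The only difference is that you spell out the bookkeeping step $\sqrt{p}\,(\cdot)^{1/3}=(p^{3/2}\,\cdot)^{1/3}$ and $p^{3/2}s'=\sqrt{q^2/4+p^3/27}$, which the paper leaves implicit.
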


\begin{proof} Replacing $x$ by $\sqrt{p}y$ reduces the equation to the normal form $y^3+y +\frac{q}{p^{3/2}}=0$. Let $r'$ be the solution to the normal form.  Applying the formula in Theorem \ref{cardano} for $r'$,   $\overline r= \sqrt{p} r'$ gives the formula in the corollary as the only real solution of the  reduced equation.
\end{proof}

\begin{theorem}  The reduced form $x^3+px+q=0$ with $p <0$ has a unique real solution if and only if $\Delta = -(q^2/4+p^3/27) <0$. Moreover, this solution is given by the formula in Corollary \ref{cardcor}.
\end{theorem}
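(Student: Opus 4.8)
The plan is to split the statement into two independent claims: the count of real roots (the ``if and only if''), and the explicit Cardano-type formula. For the count I would reuse the $p<0$ machinery already in place. By Theorem \ref{thmdelta} the equation reduces to a Tusi form with $\delta=\frac{1}{2}+\frac{3\sqrt{3}q}{4\sqrt{-p^3}}$, and Theorem \ref{Tusithm} records the number of distinct real roots of a Tusi form as a function of $\delta$: exactly one root when $\delta<0$ or $\delta>1$, a double root (two distinct roots) when $\delta\in\{0,1\}$, and three distinct roots when $\delta\in(0,1)$. So it suffices to translate ``$\delta\notin[0,1]$'' into a condition on $\Delta$.

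That translation is essentially the computation already carried out in Corollary \ref{cor1}, where $\delta\in(0,1)$ is shown equivalent to $\bigl|\frac{3\sqrt{3}q}{4\sqrt{-p^3}}\bigr|<\frac{1}{2}$, which squares to $\Delta>0$; the boundary $\delta\in\{0,1\}$ is the equality case, i.e.\ $\Delta=0$. Hence $\delta\notin[0,1]$ is equivalent to the reversed strict inequality $\bigl|\frac{3\sqrt{3}q}{4\sqrt{-p^3}}\bigr|>\frac{1}{2}$, which squares to $\Delta<0$. This yields the first claim: a unique real root if and only if $\Delta<0$.

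For the formula I would argue by direct verification in the spirit of Theorem \ref{cardano}, rather than by reducing to a normal form. Indeed, for $p<0$ the substitution $x=\sqrt{-p}\,y$ produces $y^3-y+\cdots$, the Tusi-reducible normal form and \emph{not} the $y^3+y+\cdots$ treated in Corollary \ref{cardcor}, so that corollary cannot simply be invoked. Since $\Delta<0$ we have $\frac{q^2}{4}+\frac{p^3}{27}>0$, so $s=\sqrt{\frac{q^2}{4}+\frac{p^3}{27}}$ is a positive real and the real cube roots $A=\bigl(-\frac{q}{2}+s\bigr)^{1/3}$, $B=\bigl(-\frac{q}{2}-s\bigr)^{1/3}$ are well defined. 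Setting $r=A+B$ and using $(A+B)^3=A^3+B^3+3AB(A+B)$, I would compute $A^3+B^3=-q$ and $AB=\bigl(\frac{q^2}{4}-s^2\bigr)^{1/3}=\bigl(-\frac{p^3}{27}\bigr)^{1/3}$. The identity $r^3=-q+3AB\,r$ then reads $r^3+pr+q=0$ exactly when $3AB=-p$, so everything hinges on evaluating this cube root with the correct sign.

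The main obstacle, and the only real subtlety, is precisely that sign. Because $A$ and $B$ are the \emph{real} cube roots, their product is a real number whose cube is $A^3B^3=\bigl(-\frac{q}{2}+s\bigr)\bigl(-\frac{q}{2}-s\bigr)=\frac{q^2}{4}-s^2=-\frac{p^3}{27}$; the unique real number with this cube is $-\frac{p}{3}$ (positive, as $p<0$), so $3AB=-p$ and hence $r^3=-q-pr$, i.e.\ $r$ solves the equation. This is exactly where $\Delta<0$ is used: staying over the reals eliminates the ambiguity among the three complex cube roots that renders the $\Delta>0$ (irreducible) case intractable in real radicals. Finally, since the first part guarantees a unique real root, this $r$ must be it, and it coincides with the expression of Corollary \ref{cardcor}.
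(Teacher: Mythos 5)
Your proof is correct. The first half---unique real root iff $\delta\notin[0,1]$ iff $\bigl|\tfrac{3\sqrt{3}q}{4\sqrt{-p^3}}\bigr|>\tfrac{1}{2}$ iff $\Delta<0$, assembled from Theorems \ref{thmdelta} and \ref{Tusithm} and the squaring computation of Corollary \ref{cor1}---is exactly the paper's argument. For the formula the paper takes a slightly different route: it reruns the proof of Theorem \ref{cardano} on the negative normal form $y^3-y+q'=0$ with $s=\sqrt{{q'}^2/4-1/27}$ and then scales back to the reduced form, whereas you verify the formula directly on $x^3+px+q=0$. Both computations rest on the same identity $(A+B)^3=A^3+B^3+3AB(A+B)$, just in different coordinates, so this is not a fundamentally different proof; but your packaging buys two small things. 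First, it avoids the paper's loosely stated substitution ``replacing $x$ by $\sqrt{p}x$,'' which must be read as $\sqrt{-p}$ when $p<0$. Second, it makes explicit the one genuinely delicate step, namely that for the \emph{real} cube roots one has $A^3B^3=q^2/4-s^2=(-p/3)^3$ and hence $3AB=-p$ by uniqueness of real cube roots; the paper leaves this sign determination implicit. Your remark that Corollary \ref{cardcor} cannot simply be invoked (it is stated only for $p>0$) is also correct, and is precisely why the computation has to be redone rather than cited.
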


\begin{proof} The proof of the first part of the theorem follows from Tusi Theorem (Theorem \ref{Tusithm}) and Corollary \ref{cardcor}. Next, as in proof of Theorem \ref{cardano} it can be shown that choosing  $s= \sqrt{{q^2}/{4} - {1}/{27}}$, the same formula for $r$ applies. Next, replacing $x$ by $\sqrt{p} x$, as in Corollary \ref{cardcor}, the formula for the reduced equation follows.
\end{proof}

\begin{remark} We contrast the way Cardano's formula is derived in Theorem  \ref{cardano} and in standard approach. The standard approach for  solving $x^3+px+q=0$ takes a solution as $x=u+v$. Substituting for $x$ in the equation, we get $u^3+v^3+(u+v)(3uv+p)+q=0$. Next, it sets $3uv+p=0$, a non-intuitive guess, so as to get a second equation $u^3+v^3=-q$. Next, using the two equations, it recognizes that $u^3$ and $v^3$ must be the solutions to the quadratic equation $x^2+qx-p^3/27=0$. Finally, solving via the quadratic formula gives Cardano's formula.  Theorem  \ref{cardano} does not require solving a  quadratic equation and while it verifies  Cardano's formula, it is not far from an actual derivation as argued next. Suppose $q <0$ ($q >0$ can be argued similarly).
Then $r <-q^{1/3}$, otherwise $r^3+r >-q$.  Thus we may guess as solution $r=(-q - s)^{1/3}$  for some $s >0$. But it can easily be shown this does not lead to a solution. Next we may consider $r$ as the sum of two terms of the form   $(-\gamma_1 q + s )^{1/3}$ and $(-\gamma_2 q - s)^{1/3}$, where  $\gamma_1$  and $\gamma_2$ are positive numbers summing up to one. The natural choice is to take $\gamma_1=\gamma_2$ which leads to the correct solution. In other words in our approach we first guess the form of the solution in terms of $s$, then solve for the unknown value of $s$.
\end{remark}

\begin{remark}
There are other ways to derive Cardano's formula.  For instance, Nickalls \cite{Nickalls} associates parameters to the cubic equation  and shows how they lead to modification of the standard method of Cardano’s solution. He feels the standard approach should be abandoned in favour of the use of parameters which reveal how the algebraic solution is related to the geometry of the cubic. However, from the  pedagogical point of view, considering the normal form instead of the general form is more convenient in remembering the development of the solution, arriving at it directly via simple  algebraic manipulations.  Its extension to the general reduced form is a matter of another algebraic manipulation, shown in the next theorem. Cardano's formula can also be developed via much technical subject of Galois theory, see Janson \cite{Janson}.
\end{remark}

\section{Khayyam's Geometric Solution and Tusi Form} \label{sec5} Omar Khayyam (1048-1131) is well known internationally for his poetry and philosophical thoughts that so compactly emanate through his rubayiat.  His contributions to the study of solution of cubic equations  profoundly influenced the work of future Islamic mathematicians and through them, the mathematicians of Renaissance Europe.  Khayyam's work on cubic equations describes their solution as the intersection of conic sections.  Despite numerous citations of Khayyam's work, when it gets to its description, it always focuses on specific cubic equations, leaving it ambiguous how the geometric method works for the general cubic equation.  Some citations state that Khayyam described geometric ways for solving ``all'' cubic equations while some other citations state the work as applicable to ``some'' cubic equations.

Given that the analysis of Tusi's work has led us to the reduced forms with $p=\pm 1$, it is enough to  describe Khayyam's work for these two types. In fact we will develop Khayyam's approach for these two types in parallel. Consider the positive and negative normal forms
$$x^3+x+q=0, \quad x^3-x+q'=0.$$
Multiplying these by $x$ and letting $y=x^2$ we get
$$y^2+x^2+qx=0, \quad y^2-x^2+q'x=0.$$
Adding $q^2/4$ to both sides of the first equation and $-q'^2/4$ to both sides of the second equation gives
\begin{equation} \label{bothK}
\bigg (x+\frac{q}{2} \bigg )^2+ y^2= \frac{q^2}{4}, \quad \bigg (x-\frac{q'}{2} \bigg )^2-y^2=\frac{{q'}^2}{4}.
\end{equation}

Replacing $x$ by $-x$ changes $x^3 \pm x$ to $-(x^3 \pm x)$ and thus we may assume $q$ and $q'$ have arbitrary signs. We assume $q <0$ and $q' >0$.  The assumption $q< 0$ makes the only root of the positive normal form to be positive. In contrast if $q' >0$, when the negative normal form has a single real root it will be positive and when the equation has three real roots, two of them will be positive (as in Tusi form with $\delta \in (0,1)$).  Thus the first  equation in (\ref{bothK}) describes a circle centered at $(-q/2,0)$ while the second equation in (\ref{bothK}) is a hyperbola centered at $(q'/2,0)$. Both pass through the origin. Since $y=x^2$, solving the normal forms can thus be seen as finding the (nontrivial) intersection of the parabola $y=x^2$ with the circle and the hyperbola, respectively. While Omar Khayyam's actual proof may not have been as simple as above, it completely characterizes Khayyam's work on all nontrivial real cubic equations. The simplicity here is due to the fact that there are only two types of cubic equations, namely the normal forms.  Figures 2 and 3 demonstrate the cases of circle and parabola, respectively.

\begin{remark}  The  above derivation of Khayyam's geometric solution to cubic equations is different from typical proofs, see e.g., Amir-Moez \cite{Amir}.  Our approach is algebraic but gives geometric interpretation at the end. To show the difference with typical derivations of Khayyam's method, consider the equation $x^3+Bx-C=0$, $B >0$, $C >0$.  The quantity $B$ is replaced with $b^2$ and $C$ with $b^2c$. Next, the equation is multiplied by $x$ and divided by $b^2$ to get $x^4/b^2+x^2-cx=0$.  Next, the substitution $y=x^2/b$ is applied, also $c^2/4$ is added to both sides of the equation to complete the square. The resulting equation is $(x-c/2)^2 +y^2=c^2/4$, a circle with center at $(c/2,0)$ of radius $c/2$.  It remains to prove that a nontrivial intersection of the circle with the parabola $y=x^2/b$ is a solution to the original cubic equation. This requires an argument and is proved via similarity of triangles and Pythagorean theorem, see e.g., \cite{Amir}. In contrast our approach bypasses the need for these and works whether we seek nontrivial intersection of the parabola with a circle or with a hyperbola.
\end{remark}

Figure 2 shows three different cases occurring for a positive normal form, when the intersection of parabola and circle is to the right of the center of the circle, at the center, and to the left of the center, corresponding to when $-q =1, 2, 3$, respectively. Figure 3 shows three different cases for the negative normal form corresponding to $q'=0.1, 2/3 \sqrt{3}, 0.7$. These cases are derived from Corollary \ref{cor1}, see  (\ref{corbdd}), from which there are three distinct roots if and only if ${3\sqrt{3}q'}/{4} < {1}/{2}$ (equivalently when $q' < 2/3\sqrt{3}$). When $q'=2/3\sqrt{3}$ it correspond to the case with $\delta =1$ in Tusi form and we see in the figure that the hyperbola and the parabola touch at one point on the right-hand-side corresponding to a double root. There is also a negative root in this case.

\begin{figure}[htpb] \label{FigK1}
\centering
\begin{tikzpicture}[scale=1.]
\begin{axis}[xmin=-1.5, xmax=1.5, ymin=-1., ymax=1.6, samples=200, minor tick num=1, axis lines = middle, legend style={nodes={scale=0.7, transform shape},at={(1.2, 1.1)},anchor=north east}]

 \addplot[variable=t, domain=0:360, red, thick] ({sin(t)/2 + .5}, {cos(t)/2});
  \addplot[variable=t, domain=0:360, green, thick] ({sin(t) + 1.0}, {cos(t)});
   \addplot[variable=t, domain=0:360, blue, thick] ({1.5*sin(t) + 1.5}, {1.5*cos(t)});
  \addplot[black, thick] (x,x*x);
  \addplot[red, mark=*] coordinates{(0.682328, 0.465571)};
    \addplot[green, mark=*] coordinates{(1.0, 1.0)};
    \addplot[blue, mark=*] coordinates{(1.23140, 1.47237)};
  \addplot[black, mark=*] coordinates{(0, 0)};
\end{axis}
\end{tikzpicture}
\caption{The case of $x^3+x+q=0$: Intersection of $y=x^2$ and  $(x+q/2)^2+y^2=q^2/4$, $-q=1, 2, 3$, corresponding to red, green and blue circles, respectively.}
\end{figure}
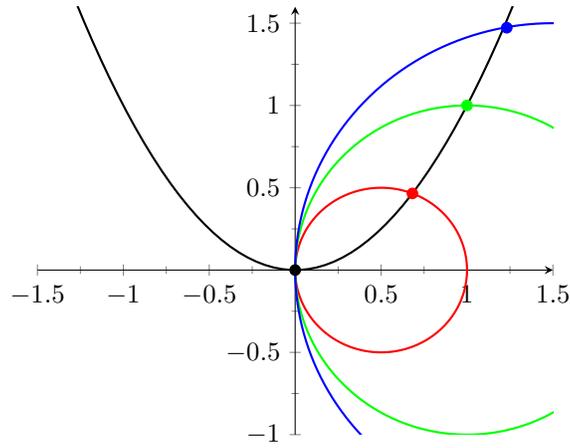

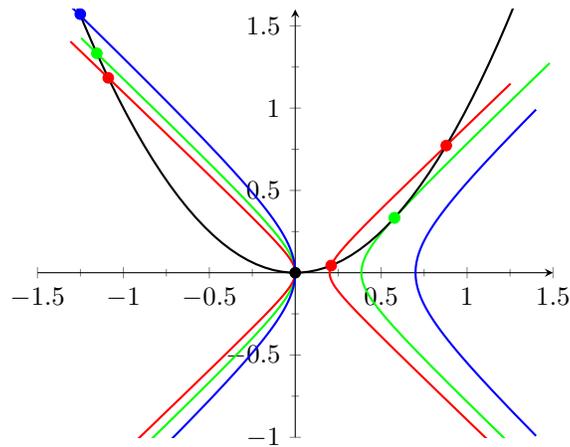
\begin{figure}[htpb] \label{FigK2}
\centering
\begin{tikzpicture}[scale=1.]
\begin{axis}[xmin=-1.5, xmax=1.5, ymin=-1, ymax=1.6, samples=200, minor tick num=1, axis lines = middle, legend style={nodes={scale=0.7, transform shape},at={(1.2, 1.1)},anchor=north east}, restrict x to domain=-1.5:1.5]

  \addplot[variable=t, domain=0:360, green, thick] ({0.19245*sec(t)+0.19245}, {0.037037^.5*tan(t)});
   \addplot[variable=t, domain=0:360, red, thick] ({(sec(t)/10 +0.1)}, {tan(t)/10});
    \addplot[variable=t, domain=0:360, blue, thick] ({(.35*sec(t) + .35)}, {.35*tan(t)});
  \addplot[black, thick] (x,x*x);
  \addplot[green, mark=*] coordinates{(-1.1547, 1.3333)};
   \addplot[red, mark=*] coordinates{(-1.08803, 1.18382)};
    \addplot[blue, mark=*] coordinates{(-1.250803, 1.57082)};
  \addplot[black, mark=*] coordinates{(0, 0)};
  \addplot[green, mark=*] coordinates{(0.577024, 0.332957)};
  \addplot[green, mark=*] coordinates{(0.577676, 0.33371)};
   \addplot[red, mark=*] coordinates{(0.209149, 0.0437432)};
  \addplot[red, mark=*] coordinates{(0.878885, 0.772439)};
\end{axis}
\end{tikzpicture}
\caption{The case of $x^3-x+q'=0$: Intersection of $y=x^2$ and hyperbola $(x-q'/2)^2-y^2={q'}^2/4$,  $q'=0.1, 2/3 \sqrt{3}, 0.7$, corresponding to red (three distinct roots), green (double root on the right, also a root on the left), blue (one root on the left) hyperbolas, respectively.}
\end{figure}

Khayyam's geometric description of the solutions does not by itself provide an approximation to a solution. Nor does Tusi's classification of zeros. However, we can actually combine the knowledge from both to devise algorithms that can approximate the real roots of arbitrary real cubic equations to arbitrary precision. We have already seen how Tusi form can help in identifying the number of roots and the intervals containing them. Here we show how Khayyam's classification can give rise to an algorithm. Consider the normal form $x^3+x+q=0$ and assume $q <0$.  The only real solution is the intersection of the parabola $y=x^2$ and the circle $(x+q/2)^2+y^2=q^2/4$. Since the solution is positive, we search for the intersection of the parabola and the half-circle $y=\sqrt{q^2/4-(x+q/2)^2}=\sqrt{-x^2-qx}$.  The only root lies in the interval $(0,-q)$.  The proposed iterative algorithm works as follows:  Depending upon the value of $-q$ being less than one or bigger than one, the  root lies in $(0,-q/2)$ or $(-q/2, -q)$, respectively.
Now assume we have two points $A$, $B$ both belonging to one of the intervals such that at $A$ the parabola lies above the circle and at $B$ it lies below the circle and it is known the root lies in the interval with endpoints $A,B$. The line segment passing through the points $(A,A^2)$ and $(B, B^2)$ intersects the circle at a point with $x$-coordinate $C$.
Since the equation of the line passing through the two points is given by $y=(A+B)x-AB$, $C$ is the positive solution to the equation $\sqrt{-x^2-qx}=(A+B)x-AB$. Equivalently, it is the positive solution to the quadratic equation
$$((A+B)^2+1)x^2 +(q -2AB(A+B))x+A^2B^2=0.$$
This will require the computation of a square-root.  Rather, we could also use instead of $C$ the Newton iterate for solving this quadratic equation with respect to the iterate at $C'=(A+B)/2$, the midpoint of the interval with endpoint $A,B$.  Having computed $C$ or $C'$, depending  on whether the parabola lies over or under the circle at this point, we replace it either with $A$ or $B$ and repeat the process. Such an algorithm could certainly have been conceived by Khayyam, including the use of Newton's iterate for approximation of square roots, known to the Babylonians. The convergence of this procedure would most likely be faster than using only the bisection method. We will however not pursue this here.

For dealing with the negative normal form $x^3-x+q'=0$, $q' >0$, intervals can be derived containing the single root or the three roots as follows. Suppose $r$ is a positive root. Then $r^3-r =-q' <0$.  This implies $r^3 < r$. Since $r >0$, it follows that $r < 1$.  Suppose $r <0$.  If $r \leq -\sqrt{2}$, then $r^2 \geq 2$ implying $r^3/2 \leq  r$. From this we get, $q'= -r^3+r \geq -r^3+r^3/2 =-r^3/2$. Thus $r \geq -{2q'}^{1/3}$. Thus if $r <0$ it must be at least $L=\min \{-\sqrt{2}, -{2q'}^{1/3}\}$.
So any root of the negative normal form must lie in the interval $[L, 1)$. We can thus give a similar algorithm to the one described for positive normal form for approximating roots of negative normal form.

\section{Quadratic Tuis Form} \label{sec6} Here we consider the Tusi methodology for a quadratic equation.

\begin{definition}  The reduced form of a quadratic equation and the corresponding Tusi form are, respectively,
$$
x^2-bx + c =0, \quad b >0, \quad  \alpha^2- \alpha +\frac{\delta}{4}=0, \quad \delta \in \mathbb{R}.$$
\end{definition}

The above forms are analogous to  Tusi formulation for cubic. Unlike the cubic case, there is only one reduced form. Setting $\phi(\alpha)= \alpha - \alpha^2$,
to get its maximum in the interval $[0,1]$, we write $\alpha - \alpha^2=\alpha (1- \alpha)$.  This makes it easy to guess the maximizer to be $\alpha^*=1/2$.  To prove this, set $\alpha^* =1/2 \pm \varepsilon$ for some $\varepsilon \in [0,1/2]$. Then $\alpha^*(1 - \alpha^*)=(1/2-\varepsilon)((1/2+\varepsilon)=1/4 - \varepsilon ^2$. It follows that the maximum $\phi^*$ of $\phi(\alpha)$ over the interval $(- \infty, \infty)$ occurs at $1/2$ and is $1/4$. Figure 4 gives the corresponding graph.

\begin{theorem}

The number of solutions and solutions of the equation  $\alpha^2- \alpha+\frac{\delta}{4}=0$ and $x^2-bx+c=0$ ($b >0$) satisfy, respectively

$$
\begin{cases}
0, \quad  & \text{if ~} \delta > 1\\
1, \quad  \alpha^*=\frac{1}{2}, & \text{if ~} \delta=1\\
2, \quad \frac{1}{2}\pm \frac{\sqrt{1-\delta}}{2}, & \text{if ~} \delta < 1
\end{cases}, \quad  \begin{cases}
0,  \quad  & \text{if ~} c > \frac{b^2}{4}\\
1, \quad  x^*=\frac{b}{2}, & \text{if ~} c=\frac{b^2}{4}\\
2, \quad  \frac{b}{2}\pm \frac{\sqrt{b^2-4c}}{2}, & \text{if ~} c < \frac{b^2}{4}.
\end{cases}
$$
\end{theorem}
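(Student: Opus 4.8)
The plan is to treat both equations at once, exactly mirroring the cubic development: first analyze the quadratic Tusi form $\alpha^2-\alpha+\frac{\delta}{4}=0$ using the maximizer already found, then reduce the general quadratic to it by a single scaling. For the Tusi form I would rewrite the equation as $\phi(\alpha)=\frac{\delta}{4}$, where $\phi(\alpha)=\alpha-\alpha^2$, so that the number of real solutions equals the number of intersections of the horizontal line $y=\frac{\delta}{4}$ with the parabola $y=\phi(\alpha)$. Since the preceding paragraph establishes that $\phi$ attains its global maximum $\phi^*=\frac{1}{4}$ uniquely at $\alpha^*=\frac{1}{2}$, while $\phi(\alpha)\to-\infty$ on both tails, the line meets the parabola in zero, one, or two points according to whether $\frac{\delta}{4}>\frac{1}{4}$, $\frac{\delta}{4}=\frac{1}{4}$, or $\frac{\delta}{4}<\frac{1}{4}$, that is, according to whether $\delta>1$, $\delta=1$, or $\delta<1$. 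This immediately yields the solution counts and the single root $\alpha^*=\frac{1}{2}$ in the boundary case.

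For the two-solution case $\delta<1$ I would reuse the symmetric parametrization already employed to locate the maximizer: writing $\alpha=\frac{1}{2}\pm\varepsilon$ gives $\phi\bigl(\frac{1}{2}\pm\varepsilon\bigr)=\frac{1}{4}-\varepsilon^2$. Setting this equal to $\frac{\delta}{4}$ forces $\varepsilon^2=\frac{1-\delta}{4}$, hence $\varepsilon=\frac{\sqrt{1-\delta}}{2}$, so the two roots are $\alpha=\frac{1}{2}\pm\frac{\sqrt{1-\delta}}{2}$, as claimed. Thus the entire left-hand column follows with no computation beyond what is already in the text.

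To obtain the statement for the general reduced form $x^2-bx+c=0$ with $b>0$, I would substitute $x=b\alpha$, which produces $b^2(\alpha^2-\alpha)+c=0$, i.e. $\alpha^2-\alpha+\frac{c}{b^2}=0$, a Tusi form with $\delta=\frac{4c}{b^2}$. Pushing the three thresholds through $\delta=\frac{4c}{b^2}$ converts $\delta>1$, $\delta=1$, $\delta<1$ into $c>\frac{b^2}{4}$, $c=\frac{b^2}{4}$, $c<\frac{b^2}{4}$, and multiplying the Tusi roots by $b$ recovers $x^*=\frac{b}{2}$ in the middle case and $x=\frac{b}{2}\pm\frac{\sqrt{b^2-4c}}{2}$ in the last case, using $b\sqrt{1-\delta}=b\sqrt{1-\frac{4c}{b^2}}=\sqrt{b^2-4c}$, which is valid precisely because $b>0$.

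Since all the analytic content, namely the value and uniqueness of the maximizer together with the behavior of $\phi$ on the tails, is already established, there is no genuine obstacle here. The only point requiring mild care is the back-substitution in the general case, where one must track the sign of $b$ in order to pull $b$ inside the radical correctly; had $b$ been allowed to be negative the radical identity would need an absolute value, so the standing hypothesis $b>0$ is exactly what keeps the formula clean.
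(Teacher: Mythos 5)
Your proposal is correct and follows essentially the same route as the paper: the symmetric parametrization $\alpha=\tfrac{1}{2}\pm\varepsilon$ with $\phi(\alpha)=\tfrac{1}{4}-\varepsilon^{2}$ is exactly the paper's completed square $(\alpha-\tfrac{1}{2})^{2}=\tfrac{1-\delta}{4}$, and the reduction of $x^{2}-bx+c=0$ via $x=b\alpha$, $\delta=\tfrac{4c}{b^{2}}$ matches the paper's back-substitution, including the use of $b>0$ to write $b\sqrt{1-\delta}=\sqrt{b^{2}-4c}$.
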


\begin{proof}  Analogous to the cubic case, by factoring,
$
\phi(\alpha)-\phi^*= \alpha - \alpha^2- \frac{1}{4}=-(\alpha-\frac{1}{2})^2.
$
Thus to solve the equation $\alpha- \alpha^2- \delta/4 =0$ for a given $\delta \leq 1$, we add and subtract $1/4$ to get
$
(\alpha- \frac{1}{2})^2=\frac{1-\delta}{4}.
$
Then take square-root to solve for $\alpha$.  This proves the first equation. To get the roots of $x^2-bx+c$, we simply multiply the roots of Tusi form by $b$ and replace $\delta$ with $4c/b^2$:
$$
b \bigg ( \frac{1}{2}\pm \frac{\sqrt{1-\delta}}{2} \bigg )= \frac{b}{2} \pm  \frac{b}{2} \sqrt{1- \frac{4c}{b^2}}= \frac{b}{2}\pm \frac{\sqrt{b^2-4c}}{2}.
$$
\end{proof}

\begin{figure}[htpb] \label{Figquad}
\centering
\begin{tikzpicture}[scale=1.2]
\begin{axis}[xmin=-1.5,xmax=1.5,ymin=-1.5,ymax=1.5, samples=200,minor tick num=1,xticklabels = {,,},axis lines = middle, legend pos = south east]
\addplot[blue, thick] (x,x - x*x);
\addplot[red, thick] (x,1/4);
\addplot[black, mark=*] coordinates {(1/2,0)};
\addplot[black, mark=*] coordinates {(1/2,1/4)} node[below = .3cm] {$\alpha^* = \frac{1}{2}$};
\addplot[black, mark=*] coordinates {(1/2,0)} node[above = .5cm] {~~~$\alpha-\alpha^2$};
\end{axis}
\end{tikzpicture}
\caption{Any quadratic equation is reducible to $\alpha-\alpha^2=\delta/4$.}
\end{figure}
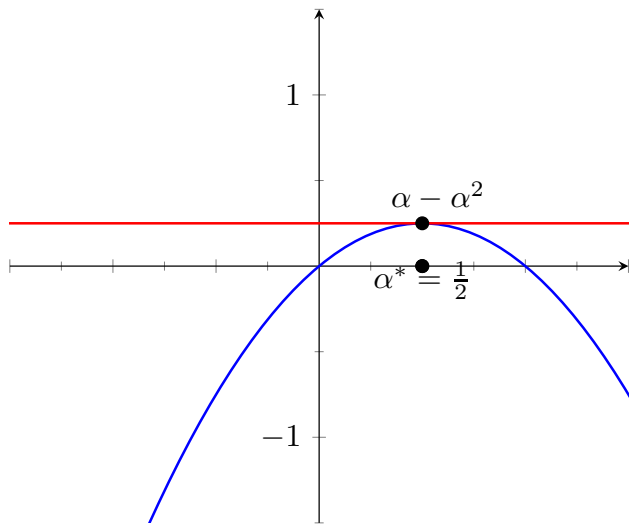

\begin{remark}

Po-Shen Loh \cite{Loh} develops a new way to derive the quadratic formula, also
featured in the New York Times \cite{NYT}.  Here we wish to mention a resemblance between his derivation of the quadratic formula and what Tusi's methodology gives, when applied to a quadratic equation. Loh considers the quadratic equation $x^2+Bx+C=0$, where $B$ and $C$ are complex numbers.  To solve the equation using factorization gives
$$x^2+Bx+C=(x-R)(x-S),$$
where we solve for $R$ and $S$ by recognizing that $R+S=-B$ and $RS=C$.  He writes,\\

``Two numbers sum to $-B$ precisely when their average is $-B/2$, and so it suffices to find two numbers of the form $-B/2\pm z$ which multiply to $C$, where $z$ is a single unknown quantity, because they will automatically have the desired average.''\\

This implies $(-B/2-z)(-B/2+z)=-B^2/4-z^2=C$ from which it follows $z=\pm \sqrt{B^2/4-C}$. Hence the roots are $-B/2 \pm \sqrt{B^2/4-C}$.

We see that when solving quadratic equations with real coefficients there is some resemblance between Loh's approach and what is derivable from the Tusi form of a quadratic, at least in that symmetry of the solutions around $1/2$, the maximizer of $\alpha-\alpha^2$, is easily witnessed. Indeed when solving a quadratic equation with real coefficients, from pedagogical point of view  it is constructive to reduce it into the Tusi form.  Simply by keeping in mind the graph of $\alpha-\alpha^2$ and by reducing the particular equation to the form  $\alpha- \alpha^2=\delta/4$, one immediately gets a sense of the solvability of the equation and the symmetry of the solutions around $1/2$.  In summary, understanding the shape of one quadratic function is enough to grasp the way we solve all quadratic equations.
\end{remark}

\section{Generalized Tusi Form} \label{sec7}

By a {\it generalized Tusi form} we mean the polynomial  equation,
$p_n(\alpha)=\alpha^n-\alpha^{n-1}+c=0$, $n \geq 2$. The following is obvious and analogous to the case of $n=3$. Letting $\phi_n(\alpha) = \alpha^{n-1}- \alpha^n$,
a real number $r$ satisfies $p_n(r)=0$ if and only if  $\phi_n(r)=c$.

\begin{lemma}  \label{lem2}

$$
\phi_n(\alpha)=
\begin{cases}
>0, & \text{if $n$ is odd and ~} \alpha < 0\\
<0, & \text{if $n$ is even and ~} \alpha < 0\\
0, & \text{if ~} \alpha=0\\
>0 , & \text{if ~} 0 < \alpha < 1 \\
0, & \text{if ~} \alpha =1\\
< 0, & \text{if~} \alpha >1
\end{cases},
\quad
\phi_n(\alpha)
\begin{cases}
\rightarrow -\infty, & \text{as~} \alpha  \rightarrow +\infty\\
\rightarrow +\infty,  & \text{as~} \alpha \rightarrow -\infty~ \text{if n is odd}\\
\rightarrow -\infty,  & \text{as~} \alpha \rightarrow -\infty~  \text{if n is even}.
\end{cases}
$$
\end{lemma}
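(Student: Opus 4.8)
The plan is to reduce everything to the single factorization
\[
\phi_n(\alpha) = \alpha^{n-1} - \alpha^n = \alpha^{n-1}(1 - \alpha),
\]
valid for all $\alpha$ and every $n \geq 2$. Once this is in hand, both the sign table and the limiting behavior follow by inspecting the two factors separately, the only care needed being the parity of the exponent $n-1$.

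First I would establish the sign table. The factor $(1-\alpha)$ is positive for $\alpha < 1$, zero at $\alpha = 1$, and negative for $\alpha > 1$, independent of $n$. The factor $\alpha^{n-1}$ vanishes at $\alpha = 0$ (since $n-1 \geq 1$) and is strictly positive for $\alpha > 0$. The two interior rows $0 < \alpha < 1$ and $\alpha > 1$, together with the boundary rows $\alpha = 0$ and $\alpha = 1$, then read off immediately as products of signs. Parity enters only on $\alpha < 0$: there $1 - \alpha > 0$, so the sign of $\phi_n(\alpha)$ equals the sign of $\alpha^{n-1}$, which is positive when $n-1$ is even (i.e.\ $n$ odd) and negative when $n-1$ is odd (i.e.\ $n$ even). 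This yields the first two rows.

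For the limiting behavior I would argue directly from the leading monomial $-\alpha^n$, which dominates $\alpha^{n-1}$ as $|\alpha| \to \infty$. As $\alpha \to +\infty$ one has $-\alpha^n \to -\infty$ regardless of parity. As $\alpha \to -\infty$ the sign of $\alpha^n$ is governed by the parity of $n$: for $n$ odd, $\alpha^n \to -\infty$, so $\phi_n(\alpha) \to +\infty$; for $n$ even, $\alpha^n \to +\infty$, so $\phi_n(\alpha) \to -\infty$. Alternatively these limits read off the factored form, since $\alpha^{n-1} \to \pm\infty$ according to the parity of $n-1$ while $1-\alpha \to +\infty$.

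Since the lemma is elementary, there is no genuine obstacle; the single point demanding attention is the bookkeeping of parity, specifically remembering that the relevant exponent in the factored form is $n-1$ rather than $n$, so that the $\alpha < 0$ sign rows are indexed by the parity of $n$ through $n-1$. Keeping the two representations cleanly separated, namely $\alpha^{n-1}(1-\alpha)$ for the sign table and the leading term $-\alpha^n$ for the limits, removes the only source of confusion.
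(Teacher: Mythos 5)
Your proof is correct and follows essentially the same route as the paper: the sign table and the limits both come down to the factorization $\phi_n(\alpha)=\alpha^{n-1}(1-\alpha)$ together with parity bookkeeping on the exponent $n-1$, which is exactly what the paper's (terser) proof does. Your version is if anything slightly more explicit about which factor controls which row, but there is no substantive difference.
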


\begin{proof}
When $\alpha <0$, $\phi_n(\alpha)$ is positive for $n$ odd and negative for $n$ even. On  the other hand, $\phi_n(\alpha)=0$ for $\alpha=0$ and $\alpha=1$,  $\phi_n(\alpha) >0$ if $\alpha \in (0,1)$ and $\phi_n(\alpha) <0$ for $\alpha >1$.  This verifies the first part.  The proof of the second part follows from the equation $\phi_n(\alpha)=\alpha^{n-1}(1-\alpha)$.
\end{proof}

Analogous to the case of $n=2$ and $n=3$, without using differentiation one can guess the location of the maximizer, $\alpha_n^*$ of $\phi_n$ on $(0,1)$. In fact recalling that $\alpha_2^*=1/2$ and $\alpha_3^*=2/3$, it is straightforward to guess $\alpha_n^*$ to be $(n-1)/n$. Analogous to the case of $n=3$, Lemma \ref{lem3} verifies the correctness of this guess. Subsequently, the theorem uses the lemma to classify the number of zeros of $\phi_n$ and containing intervals.

\begin{lemma}   \label{lem3} $\phi_n^* = \max \{\phi_n(\alpha): \alpha \in [0, \infty)\} = \phi(\alpha_n^*)$, where
$\alpha _n^*= \frac{n-1}{n}$. Hence $\phi_n^*=\frac{(n-1)^{n-1}}{n^n}$. Moreover,

\begin{equation} \label{lem1eq2}
\phi_n(\alpha)
\begin{cases}
\nearrow   \text{on~} (-\infty, 0], & \text{if n is even}\\
\searrow   \text{on~} (-\infty, 0], & \text{if n is odd}\\
\nearrow   \text{on~} [0, \alpha_n^*]\\
\searrow  \text{on~} [\alpha_n^*, \infty).
\end{cases}
\end{equation}
\end{lemma}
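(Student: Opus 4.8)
The plan is to follow the template of the cubic case (Lemma \ref{lem3z}), but to replace its explicit cubic factorization by two ingredients that scale to arbitrary $n$: a single inequality for the maximum, and a ratio comparison for the monotonicity. Throughout I use the factored form $\phi_n(\alpha)=\alpha^{n-1}(1-\alpha)$ already recorded in the proof of Lemma \ref{lem2}.

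\emph{Maximum value and maximizer.} By Lemma \ref{lem2}, $\phi_n$ vanishes at $0$ and $1$, is positive on $(0,1)$, and is nonpositive on $[1,\infty)$; hence the maximum over $[0,\infty)$ is attained in $(0,1)$, and it suffices to maximize $\alpha^{n-1}(1-\alpha)$ for $\alpha\in(0,1)$. Here I would apply the arithmetic--geometric mean inequality to the $n$ positive numbers consisting of $n-1$ copies of $\tfrac{\alpha}{n-1}$ together with $1-\alpha$, whose arithmetic mean is $\tfrac1n$:
$$
\Big(\tfrac{\alpha}{n-1}\Big)^{n-1}(1-\alpha)\le\frac{1}{n^{\,n}},
\qquad\text{equivalently}\qquad
\alpha^{n-1}(1-\alpha)\le\frac{(n-1)^{n-1}}{n^{\,n}},
$$
with equality exactly when $\tfrac{\alpha}{n-1}=1-\alpha$, i.e. $\alpha=\tfrac{n-1}{n}=\alpha_n^*$. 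This delivers both $\phi_n^*=(n-1)^{n-1}/n^{\,n}$ and the uniqueness of the maximizer at once. (Equivalently, in the spirit of Lemma \ref{lem3z}, one may verify that $\alpha_n^*$ is a double root of $\phi_n(\alpha)-\phi_n^*$ by expanding $\phi_n(\alpha_n^*+t)$ with the binomial theorem and checking that the coefficient of $t$ is $(\alpha_n^*)^{n-2}\big((n-1)-n\alpha_n^*\big)=0$.)

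\emph{Monotonicity.} I would treat the three ranges of \eqref{lem1eq2} separately. On $(-\infty,0]$, substituting $\alpha=-\gamma$ with $\gamma\ge0$ gives $\phi_n(-\gamma)=(-1)^{n-1}\big(\gamma^{n-1}+\gamma^{n}\big)$; since $\gamma^{n-1}+\gamma^{n}$ is strictly increasing in $\gamma$, the sign $(-1)^{n-1}$ produces exactly the stated increase for $n$ even and decrease for $n$ odd. On $[1,\infty)$ the factor $\alpha^{n-1}$ is positive and increasing while $1-\alpha$ is negative and decreasing, so a short direct comparison shows $\phi_n$ strictly decreasing there. The delicate range is $(0,1)$, where $\phi_n$ is the product of the increasing factor $\alpha^{n-1}$ and the decreasing factor $1-\alpha$, so monotonicity is invisible termwise; this is the main obstacle. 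To resolve it I would pass to the ratio: for $0<\alpha_1<\alpha_2<1$ with $r=\alpha_1/\alpha_2\in(0,1)$,
$$
\frac{\phi_n(\alpha_2)}{\phi_n(\alpha_1)}
=\Big(\tfrac{\alpha_2}{\alpha_1}\Big)^{n-1}\,\frac{1-\alpha_2}{1-\alpha_1},
$$
and comparing this ratio with $1$ reduces, after clearing denominators and using $\alpha_1=r\alpha_2$, to a Bernoulli-type (equivalently, AM--GM) inequality for a convex power: the sub-case $\alpha_2\le\alpha_n^*$ (monotone increase) produces $(n-1)r^{\,n}-n r^{\,n-1}+1\ge0$, and the sub-case $\alpha_1\ge\alpha_n^*$ (monotone decrease) produces $r^{\,n}\ge nr-(n-1)$, each for $r\in(0,1)$. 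Every such inequality has a double root at $r=1$ and therefore factors as $(1-r)^2$ times a polynomial with positive coefficients (for instance $(n-1)r^{\,n}-n r^{\,n-1}+1=(1-r)^2\sum_{k=1}^{n-1}k\,r^{\,k-1}$), so it is strict for $r\ne1$. Combined with $\alpha_n^*=\tfrac{n-1}{n}$, this yields $\phi_n(\alpha_1)<\phi_n(\alpha_2)$ throughout $[0,\alpha_n^*]$ and $\phi_n(\alpha_1)>\phi_n(\alpha_2)$ throughout $[\alpha_n^*,1]$. Finally, since $\phi_n$ is a polynomial, continuity at $\alpha=0$ and $\alpha=1$ glues the three ranges into the closed-interval statements of \eqref{lem1eq2}.

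I expect the Bernoulli-type step on $(0,1)$ to be the crux: it is the general-$n$ substitute for the ad hoc cubic manipulation in Lemma \ref{lem3z}, and its only real content is identifying the correct polynomial and exhibiting its $(1-r)^2$ factorization with a positive cofactor. Everything else --- the two outer ranges and the gluing at the endpoints --- is routine bookkeeping.
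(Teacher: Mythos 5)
Your proposal is correct, but it reaches the lemma by a genuinely different route than the paper. The paper's proof is a single global factorization: dividing $\phi_n(\alpha)-\phi_n^*$ twice by $(\alpha-\alpha_n^*)$ yields $\phi_n(\alpha)-\phi_n^*=-(\alpha-\alpha_n^*)^2V_n(\alpha)$ with $V_n$ an explicit polynomial with positive coefficients; the sign of this expression gives the maximum (after a termwise bound showing the first quotient $U_n$ is positive for $\alpha>\alpha_n^*$ and negative for $\alpha<\alpha_n^*$), and the monotonicity on $[0,\alpha_n^*]$ and $[\alpha_n^*,\infty)$ follows by comparing the factors $(\alpha-\alpha_n^*)^2$ and $V_n(\alpha)$ separately, exactly as in the cubic case of Lemma \ref{lem3z}. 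You instead get the maximum and the uniqueness of the maximizer in one stroke from AM--GM applied to $n-1$ copies of $\alpha/(n-1)$ and $1-\alpha$, and you get monotonicity on $(0,1)$ from the ratio $\phi_n(\alpha_2)/\phi_n(\alpha_1)$, which correctly reduces to the inequalities $(n-1)r^n-nr^{n-1}+1\ge 0$ and $r^n\ge nr-(n-1)$ for $r=\alpha_1/\alpha_2\in(0,1)$; your $(1-r)^2$ factorizations of these are valid (e.g. $(n-1)r^n-nr^{n-1}+1=(1-r)^2\sum_{k=1}^{n-1}kr^{k-1}$), and your treatments of $[1,\infty)$ and $(-\infty,0]$ match the paper's in spirit. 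What each approach buys: yours is shorter and avoids writing down the coefficients of $U_n$ and $V_n$, and AM--GM hands you uniqueness of the maximizer for free; but note that your Bernoulli-type inequalities are themselves proved by exhibiting a $(1-r)^2$ factor with a positive cofactor, so both proofs ultimately rest on the same mechanism --- a double root at the critical point times a positive polynomial --- applied by the paper to $\phi_n(\alpha)-\phi_n^*$ in $\alpha$ and by you to a one-variable polynomial in the ratio $r$. The paper's version also stays closer to the calculus-free factorization theme it attributes to Tusi's method, whereas AM--GM, while elementary, is a different toolkit; and the paper's single factorization covers all of $[0,\infty)$ uniformly, while you must split into increasing and decreasing sub-cases and handle $[1,\infty)$ separately.
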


\begin{proof} We first prove  the maximum of
$\alpha^{n-1}-\alpha^n - \phi_n^*$ on $[0,\infty)$ is zero. Dividing by
 $(\alpha - \alpha_n^*)$ we get the factorization
\begin{equation} \label{eq11}
\alpha^{n-1}-\alpha^n - \phi_n^*=(\alpha_n^*- \alpha ) U_n(\alpha),
\end{equation}
where
\begin{equation}
U_n(\alpha)=\alpha^{n-1}- \frac{1}{n}\alpha^{n-2}-\frac{n-1}{n^2}\alpha^{n-3} - \cdots - \frac{(n-1)^{n-3}}{n^{n-2}}\alpha- \frac{(n-1)^{n-2}}{n^{n-1}}.
\end{equation}
The factorization is easily verifiable by multiplying the terms and simplifying. We claim $U_n(\alpha_n^*)=0$.  The first term of $U_n(\alpha_n^*)$  is $(n-1)^{n-1}/n^{n-1}$ while each of the remaining $n-1$ terms is $-(n-1)^{n-2}/n^{n-1}$. Thus the $n$ terms sum to zero. Suppose $\alpha > \alpha_n^*$. Then for each $i=2, \dots, n$,
\begin{equation}
-\frac{(n-1)^{i-2}}{n^{i-1}}\alpha^{n-i} \leq -\frac{(n-1)^{n-2}}{n^{n-1}}.
\end{equation}
It follows that
\begin{equation} \label{eq13}
U_n(\alpha) \geq
\alpha^{n-1} - \frac{(n-1)^{n-1}}{n^{n-1}} =\alpha^{n-1}- {\alpha_n^*}^{n-1}  >0.
\end{equation}
Thus from (\ref{eq11}) and (\ref{eq13}) it follows that $\alpha > \alpha_n^*$ implies $\alpha^{n-1}-\alpha^n - \phi_n^* <0$. Similarly,  $\alpha \in [0, \alpha_n^*)$ implies $\alpha^{n-1}-\alpha^n - \phi_n^* >0$. Hence the proof that $\alpha_n^*$ is the maximizer of $\phi_n(\alpha)$ on $[0, \infty)$.  To prove monotonicity we factor $U_n(\alpha)$.  It is straightforward to verify
\begin{equation}
U_n(\alpha)= (\alpha - \alpha_n^*) V_n(\alpha),
\end{equation}
\begin{equation}
V_n(\alpha)=\alpha^{n-2}+ \frac{(n-1)^0(n-2)}{n}\alpha^{n-3}+\frac{(n-1)(n-3)}{n^2}\alpha^{n-4} +
\cdots + \frac{(n-1)^{n-4} \times 2}{n^{n-3}}\alpha+ \frac{(n-1)^{n-3}}{n^{n-2}}.
\end{equation}
Thus
\begin{equation} \label{eqfactor}
 \alpha^{n-1}-\alpha^n- \phi_n^* = -(\alpha - \alpha_n^*)^2 V_n(\alpha).
\end{equation}
All coefficients in $V_n(\alpha)$ are positive so that $0 \leq \alpha_1 < \alpha_2$ implies $V_n(\alpha_1) < V_n(\alpha_2)$.  When $0 \leq \alpha_1 <\alpha_2 \leq \alpha_n^*$,
\begin{equation}
-(\alpha_1 - \alpha_n^*)^2 < - (\alpha_2 - \alpha_n^*)^2.
\end{equation}
From these and (\ref{eqfactor}) it follows that $\phi_n(\alpha)$ is strictly increasing on $[0, \alpha_n^*]$.  When
 $\alpha_n^* \leq \alpha_1 <\alpha_2$,
\begin{equation}
 (\alpha_1 - \alpha_n^*)^2 <  (\alpha_2-\alpha_n^* )^2.
 \end{equation}
Thus
\begin{equation}
 (\alpha_1-\alpha_n^*)^2 V_n(\alpha_1) <  (\alpha_2 -\alpha_n^*)^2 V_n(\alpha_2).
 \end{equation}
This and (\ref{eqfactor}) imply $\phi_n(\alpha)$ is strictly decreasing on $[\alpha_n^*,\infty)$.  To prove the first two parts of (\ref{lem1eq2}), from Lemma \ref{lem2},  $|\phi_n(\alpha)|$ approaches infinity as $\alpha$ approaches $-\infty$. Suppose $\alpha_1 < \alpha_2 \leq 0$. Then
\begin{equation}
|\alpha_1^{n-1}| > |\alpha_2^{n-1}|, \quad (1-\alpha_1) > (1-\alpha_2).
\end{equation}
Thus $|\phi_n(\alpha_1)| >  |\phi_n(\alpha_2)|$ so that
on the interval $(-\infty, 0]$ for any $\alpha_1 \not = \alpha_2$,  $\phi_n(\alpha_1) \not = \phi_n(\alpha_2)$. This implies strict monotonicity.
\end{proof}

\begin{prop} \label{prop1} {\rm (Intermediate Value Theorem)} If $\phi_n(\alpha_1) = c_1$ for some $\alpha_1$ and $c_1$ and
$\phi_n(\alpha_2) = c_2$ for some $\alpha_2$ and $c_2$, $c_1 < c_2$, then for any $c' \in (c_1,c_2)$, there exists $\alpha'$ such that $\phi_n(\alpha')=c'$. \qed
\end{prop}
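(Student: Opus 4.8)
The plan is to recognize that $\phi_n(\alpha)=\alpha^{n-1}-\alpha^n$ is a polynomial, hence continuous on all of $\mathbb{R}$, and then to invoke the classical Intermediate Value Theorem on the closed interval whose endpoints are $\alpha_1$ and $\alpha_2$. First I would dispose of the ordering: since the statement places no hypothesis on whether $\alpha_1<\alpha_2$ or $\alpha_1>\alpha_2$, I would set $a=\min\{\alpha_1,\alpha_2\}$ and $b=\max\{\alpha_1,\alpha_2\}$ and work on $[a,b]$. The values $\phi_n(a)$ and $\phi_n(b)$ are then exactly $c_1$ and $c_2$ in one order or the other, and by hypothesis the target $c'\in(c_1,c_2)$ lies strictly between them; continuity of $\phi_n$ then delivers a point $\alpha'\in(a,b)$ with $\phi_n(\alpha')=c'$.

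The only point requiring care is that $\phi_n$ is not globally monotone: Lemma \ref{lem3} shows it rises on $[0,\alpha_n^*]$ and falls on $[\alpha_n^*,\infty)$, so one cannot simply invert a single monotone branch without first restricting attention to the subinterval between $\alpha_1$ and $\alpha_2$. Once so restricted, however, no monotonicity is actually needed---the bare continuity of a polynomial suffices---which is why I would present the argument as a direct application of the classical theorem rather than reassembling it from the monotone pieces supplied by Lemma \ref{lem3}.

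If a more constructive and historically faithful version is wanted, matching the paper's remark that this theorem ``must have been taken as granted at Tusi's time,'' I would instead locate $\alpha'$ by bisection on $[a,b]$: evaluate $\phi_n$ at the midpoint, retain the half on whose endpoints $\phi_n-c'$ changes sign, and iterate, so that the nested intervals shrink to a common limit $\alpha'$ with $\phi_n(\alpha')=c'$. The genuine mathematical content of either route is the completeness of the reals, equivalently the continuity of $\phi_n$, and the main ``obstacle'' here is really only philosophical: as the preceding remark concedes, the formal statement rests on calculus and continuity, notions not formalized in the setting being described. The honest position is therefore to state the proposition as an intuitively evident consequence of the unbroken graph of $\phi_n$, exactly as the paper treats its cubic companion, Proposition \ref{prop11}.
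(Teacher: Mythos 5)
Your proposal is correct, but note that the paper offers no proof at all for this proposition: it is stated with an immediate \qed, and the only justification given (in the remark following the cubic analogue, Proposition \ref{prop11}) is that the Intermediate Value Theorem ``is intuitive and must have been taken as granted at Tusi's time.'' So you are supplying an argument where the paper deliberately supplies none. Your argument is the standard and correct one: $\phi_n$ is a polynomial, hence continuous on $\mathbb{R}$, and the classical IVT applied on $[\min\{\alpha_1,\alpha_2\},\max\{\alpha_1,\alpha_2\}]$ produces the required $\alpha'$. Your attention to the unordered hypothesis on $\alpha_1,\alpha_2$ is appropriate, and you are right that no monotonicity from Lemma \ref{lem3} is needed once attention is restricted to that interval --- indeed the proposition is used in Theorem \ref{thm1} precisely on intervals where $\phi_n$ is \emph{not} assumed monotone, so it is good that your proof does not lean on Lemma \ref{lem3}. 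Your closing observation --- that the honest reading of the paper is to treat the proposition as an evident consequence of the unbroken graph of $\phi_n$ --- accurately captures the authors' intent; your bisection variant is a reasonable constructive supplement but is not present in the paper.
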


The following theorem characterizes the solutions of $\phi_n(\alpha)=\delta \phi_n^*$ which depend on $n$ as a parameter and $\delta$.  Using this  characterization the theorem can then characterize roots of  $p_n(\alpha)=0$ (omitted).

\begin{theorem} \label{thm1} Given a real $\delta$, the number of solutions and the solutions  of the equation $\phi_n(\alpha)=\delta \phi_n^*$ satisfy
$$
\begin{cases}
1, \quad  \overline \alpha \in (1, \infty),& \text{if $n$ odd, ~} \delta < 0\\
2, \quad \underline \alpha_1 \in (-\infty,0), \quad \overline \alpha_2 \in (1, \infty), & \text{if $n$ even, ~} \delta < 0\\
2, \quad \alpha_1 =0, \quad \alpha_2=1, & \text{if ~} \delta = 0\\
3, \quad  0 < \alpha_1 < \alpha_n^* < \alpha_2 < 1, \quad
\alpha_3 \in (-\alpha_n^*,0),& \text{if $n$ odd,~} 0 < \delta < 1\\
2, \quad  0 < \alpha_1 < \alpha_n^* < \alpha_2 < 1, & \text{if $n$ even~} 0 <  \delta < 1\\
2, \quad  \alpha_n^*, \quad  \underline \alpha_n^* \in [-\alpha_n^*,0), & \text{if $n$  odd,~} \delta=1\\
1, \quad \alpha_n^*, & \text{if $n$ even, ~} \delta=1\\
1, \quad \underline \alpha \in (-\infty, -\alpha_n^*),   & \text{if $n$  odd, ~} \delta >1\\
0, \quad  & \text{if $n$ even, ~} \delta > 1.
\end{cases}
$$
\end{theorem}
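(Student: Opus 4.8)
The plan is to read every row of the table off the graph of $\phi_n$, whose global shape is completely determined by Lemma \ref{lem3}. On $[0,\alpha_n^*]$ the function rises strictly from $\phi_n(0)=0$ to $\phi_n(\alpha_n^*)=\phi_n^*$; on $[\alpha_n^*,\infty)$ it falls strictly from $\phi_n^*$ to $-\infty$; and on $(-\infty,0]$ it is strictly monotone, decreasing from $+\infty$ down to $0$ when $n$ is odd and increasing from $-\infty$ up to $0$ when $n$ is even. Because $\phi_n$ is strictly monotone on each of these three pieces, the equation $\phi_n(\alpha)=c$ has at most one root on each piece, and by the Intermediate Value Theorem (Proposition \ref{prop1}) it has exactly one interior root on a piece precisely when $c$ lies strictly between that piece's two endpoint heights. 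The whole statement therefore reduces to comparing the target height $c=\delta\phi_n^*$ against a finite list of reference heights.

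First I would assemble that list: $\phi_n(0)=\phi_n(1)=0$ and $\phi_n(\alpha_n^*)=\phi_n^*$ from Lemma \ref{lem3}, the signs and the limits at $\pm\infty$ from Lemma \ref{lem2}, and the one new computation $\phi_n(-\alpha_n^*)=(2n-1)\phi_n^*$, which follows from $\phi_n(\alpha)=\alpha^{n-1}(1-\alpha)$ and the fact that $n-1$ is even when $n$ is odd. I would then split on the parity of $n$ and on the position of $\delta$ relative to the thresholds $0$ and $1$ (equivalently $c$ relative to $0$ and $\phi_n^*$), and tally the three contributions in each cell: the rising piece yields a root in $(0,\alpha_n^*)$ iff $0<c<\phi_n^*$; the falling piece yields a root iff $c<\phi_n^*$, lying in $(\alpha_n^*,1)$ when $c>0$ and in $(1,\infty)$ when $c<0$ since $\phi_n(1)=0$; and the open negative piece yields a root iff $c$ falls in its range, which is $(-\infty,0)$ for $n$ even (so only when $\delta<0$) and $(0,\infty)$ for $n$ odd (so whenever $\delta>0$). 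Summing these, counting $\alpha_n^*$ as a single distinct root in the boundary case $c=\phi_n^*$, and recording the degenerate row $c=0$ (roots exactly $0$ and $1$) reproduces every count in the table.

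The only step needing genuine care --- and hence the main obstacle --- is locating the negative root on the odd branch when $\delta>0$. There the negative piece decreases from $+\infty$ to $0$, so a unique negative root exists for every $c>0$; but identifying the interval that contains it is exactly where $\phi_n(-\alpha_n^*)=(2n-1)\phi_n^*$ is used. Since $\phi_n$ increases as $\alpha$ decreases along $(-\infty,0)$, the root lies in $(-\alpha_n^*,0)$ as long as $c<(2n-1)\phi_n^*$ and only moves left of $-\alpha_n^*$ once $c>(2n-1)\phi_n^*$, i.e. $\delta>2n-1$. I would therefore determine the stated containing intervals by comparing $\delta$ against both $1$ and $2n-1$, and I would cross-check the resulting intervals against the already established case $n=3$ of Theorem \ref{Tusithm}, where $\alpha_n^*=2/3$ and the negative root at $\delta=1$ equals $-1/3$, to make sure the borderline intervals are stated consistently.
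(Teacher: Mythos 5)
Your argument follows the paper's proof in all essentials: both read the root counts off the three monotone pieces of $\phi_n$ established in Lemma \ref{lem3}, apply Proposition \ref{prop1} piece by piece, and use the evaluation $\phi_n(-\alpha_n^*)=(2n-1)\phi_n^*>\phi_n^*$ to place the extra negative root when $n$ is odd. The one place where you go further is also the one place where it matters: your threshold analysis is correct that for $n$ odd the negative root of $\phi_n(\alpha)=\delta\phi_n^*$ lies to the left of $-\alpha_n^*$ only when $\delta>2n-1$, so for $1<\delta<2n-1$ it sits in $(-\alpha_n^*,0)$ rather than in the interval $(-\infty,-\alpha_n^*)$ printed in the $\delta>1$ row; the interval that your method actually yields is $(-\infty,\underline{\alpha}_n^*)$, where $\underline{\alpha}_n^*\in(-\alpha_n^*,0)$ is the negative root of $\phi_n(\alpha)=\phi_n^*$, exactly as in the $n=3$ case of Theorem \ref{Tusithm}, where that root is $-\frac{1}{3}$ and not $-\alpha_3^*=-\frac{2}{3}$. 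The paper's proof dismisses this row with ``follow analogously'' and does not address the point, so your cross-check against $n=3$ is not merely prudent --- it is needed, and it would lead you to restate that row rather than reproduce it.
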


\begin{proof} The proof of the theorem uses Lemmas \ref{lem2}, \ref{lem3} as well as Proposition \ref{prop1}.  The proof of the first item  follows from Lemma \ref{lem2} and Proposition \ref{prop1}:  $\phi_n$ is negative on the interval $(1, \infty)$ and it approaches $-\infty$ as $\alpha$ approaches $\infty$. Thus by  Proposition \ref{prop1} the corresponding equation has a root and by strict monotonicity it is a unique root. Analogous argument applies to the second statement. The proof of the third statement uses Lemma \ref{lem2}, Lemma \ref{lem3} and Proposition \ref{prop1} to conclude there is a root $\alpha_1$ in the interval $[0, \alpha_n^*)$ and another $\alpha_2$ in $(\alpha_n^*,1)$. To prove the existence of the third root $\alpha_3$ for $n$ odd,  since  $\phi_n(0)=0$ and  $\phi_n(\alpha)=\alpha^{n-1}(1-\alpha)$, we have
$$
\phi_n(-\alpha_n^*) =\frac{(n-1)^{n-1}}{n^{n-1}} \bigg ( 1+ \frac{n-1}{n} \bigg )=
\frac{(n-1)^{n-1}}{n^{n}}(2n-1) > \phi_n^*.
$$
Hence by Proposition \ref{prop1} there is a root in the indicated interval.
The proof of the forth statement is similar to the previous one but there is no third root. The proof of the fifth statement also uses Lemma \ref{lem2} and the factorization in Lemma \ref{lem3}, showing $\alpha_n^*$ is a double root.
When $n$ is odd by Proposition \ref{prop1} there is another root in the indicated interval (sixth statement) but when $n$ is even there are no other roots (seventh statement). The proof of eight and ninth statements follow analogously.
\end{proof}

Figure 5 shows the graph of $\alpha^{n-1}-\alpha^n$ for the first few values of $n$. We see the maximum value drops as $n$ increases and the maximizer moves toward one.  Using Theorem \ref{thm1} an exact formula for solutions of $\phi_n(\alpha)=\delta \phi_n^*$ directly gives an exact formula for roots of $p_n(\alpha)$.  To find  positive roots of $p_n$ we only need to look at the range of $\alpha, \delta \in [0,1]$.  If $\delta$ is outside of the range $[0,1]$, we conclude there is no positive solution.  As in the quadratic or cubic case, using binary search or in combination with Newton's method we can approximate the roots to any precision.

\begin{figure}[htpb] \label{Fig4}
\centering
\begin{tikzpicture}[scale=1.]
\begin{axis}[xmin=-0.3, xmax=1.2, ymin=-0.3, ymax=0.3, samples=200, minor tick num=1, axis lines = middle, legend style={nodes={scale=0.7, transform shape},at={(1.2, 1.1)},anchor=north east}, restrict x to domain=-1.5:1.5]

  \addplot[blue, thick] (x, {-(x-1)*x});
  \addplot[green, thick] (x, {-(x-1)*x*x});
  \addplot[red, thick] (x, {-(x-1)*x*x*x});
  \addplot[purple, thick] (x, {-(x-1)*x*x*x*x});

\end{axis}
\end{tikzpicture}
\caption{Graphs of $\alpha^{n-1}-\alpha^n$, $n=2,3,4,5$, blue, green, red, purple, respectively.}
\end{figure}
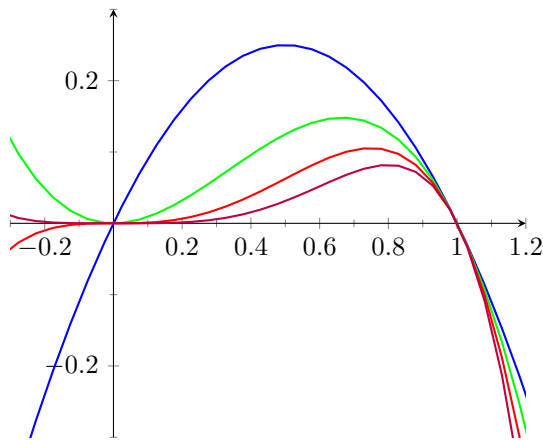

\section*{Concluding Remarks} In this article we have examined the work of Sharaf al-Din Tusi of the 12th century on cubic equations, not only from a historical point of view but from a contemporary point of view in terms of its connections to the well known Cardano's formula and its application in approximation of real zeros.  Using Tusi's methodology we characterized all real cubic equations by reducing them into two disjoint forms, a Tusi form with  $\delta \in (0,1)$  and  a reduced form with $p=1$. This characterization not only helps determine the number of real zeros but tight  containing intervals when there are three roots. This in turn  allows their numerical approximation using the bisection and/or Newton methods, bypassing the use a Cardano's formula expressed in complex numbers which itself require numerical approximation. We showed the discriminant present in Cardano's formula, is inherent in Tusi form. In fact we gave a novel proof of
Cardano's formula for a reduced form with $p >0$.  Gaining insights from the study of Tusi form, we gave  a concise derivation of Khayyam's geometric solution. Characterization  of cubic equations into two canonical forms simplifies computing their solution. In fact using Tusi's methodology we showed solving a quadratic equation is reducible  to a quadratic Tusi form from which the quadratic formula is derivable trivially.  Such canonical classifications for quartic, quintic and general  polynomial equations may also prove to be useful.  We also extended Tusi forms to arbitrary degree, showing analogous properties. We believe our results based on the study of Tusi form complement previous findings on Tusi's work and reveal further facts on history, mathematics and pedagogy in solving cubic equations.
Hence the results should be of interest to general mathematical community, historians, educators and students.

\section*{Acknowledgements}
The authors wish to thank Professor Roshdi Rashed for some references.

\bigskip


\begin{thebibliography}{9}


 \bibitem{Amir} R. Amir-Moez, Khayyam's solution of cubic equations, \textit{Mathematics Magazine} 35:5 (1962),  269-271. \filbreak

 \bibitem{Berg} J. L. Berggren, Innovation and tradition in Sharaf al-Din al-Tusi's Muadalat, \textit{Journal of the American Oriental Society} 110:2 (1990), 304-309. \filbreak

\bibitem{NYT} K. Chang and J. Corum, This Professor's Amazing Trick Makes Quadratic Equation Easier, {\it The New York Times},  2020. \filbreak

\bibitem{Christ} C. Christensen, Newton's method for resolving affected equations, The College Mathematics Journal, Vol. 27, no. 5 (1996), 330--340.

\bibitem{Hogen} J. P. Hogendijk, Sharaf al-Din al-Tusi on the number of positive roots of cubic equations, {\it Historia Mathematica}, 16 (1989), 69-85.\filbreak

\bibitem{Houz} C. Houzel, Sharaf-al-D$\bar{{\rm i}}$n al-T$\bar{{\rm u}}$s$\bar{{\rm i}}$ et le polygone de Newton, Arabic Sciences and Philosophy, September 1995, DOI: 10.1017/S0957423900002046.

\bibitem{Irving} R. Irving, \textit{Beyond the Quadratic Formula}, Mathematical Association of America, 2013.\filbreak

\bibitem{Janson} S. Janson,  Roots of polynomials of degree 3 and 4,
https://arxiv.org/pdf/1009.2373v1.pdf,  2010.\filbreak

\bibitem{Katz} V. J. Katz, \textit{A History of Mathematics}, 3rd ed. Boston MA: Pearson, 2008. \filbreak

\bibitem{Khar} Muhammad ibn Musa Al-Khwarizmi, \textit{Al-kitab al-mukhtasar fi hisab al-gabr wa’l-muqabala} (circa 825). \filbreak

\bibitem{Nickalls} R. Nickalls,  A new approach to solving the cubic:
Cardan’s solution revealed, The Mathematical Gazette, 77:480, (1993), 354–359. \filbreak

\bibitem{Loh} Po-Shen Loh, A simple proof of the quadratic formula,
https://arxiv.org/pdf/1910.06709.pdf,  2019. \filbreak

\bibitem{Rashed1} R. Rashed, R$\acute{{\rm e}}$solution des $\acute{{\rm e}}$quations numeriques en alg$\grave{{\rm e}}$bre: Sharaf-al-D$\bar{{\rm i}}$n al-T$\bar{{\rm u}}$s$\bar{{\rm i}}$, Vi$\grave{{\rm e}}$te, 1974. Archive for History of Exact Sciences 12 (1974), 244-290 (reprinted in slightly revised form in [Rashed (1984),147-193]). \filbreak

\bibitem{Rashed2}
Sharaf al-D$\bar{{\rm i}}$n al-T$\bar{{\rm u}}$s$\bar{{\rm i}}$,  Oeuvres math$\acute{{\rm e}}$matiques, edited and translated by R. Rashed, 2 volumes, Paris: Les belles lettres, 1985. \filbreak

\bibitem{Turn} H. W. Turnbull, \textit{Theory of Equations}, Oliver and Boyd, 1952.  \filbreak

\bibitem{Warden} B.L. van der Waerden, \textit{Modern Algebra} (translated from German by Fred Blum), Frederick Ungar Publ. Co., 1949, p. 180. \filbreak


\bibitem{Zucker}  I. J. Zucker,  The cubic equation – a new look at the irreducible case, The Mathematical Gazette, 92:524, (2008), 264-268. \filbreak

\end{thebibliography}
\end{document}